\renewenvironment{abstract}{
    \noindent\textbf{Abstract}\par
    \vspace{1em}\noindent\ignorespaces}
    {\par\noindent\ignorespacesafterend}
\newtheorem{theorem}{Theorem}[section]
\newtheorem{lemma}[theorem]{Lemma}
\newtheorem{proposition}[theorem]{Proposition}
\newtheorem{corollary}[theorem]{Corollary}
\numberwithin{equation}{section}
\newcommand{\Rmnum}[1]{\expandafter\@slowromancap\romannumeral #1@}
\title[POD-Greedy Algorithms]{Some New Convergence Analysis and Applications of POD-Greedy Algorithms}
\author{Yuwen Li and Yupeng Wang}
\address{School of Mathematical Sciences, Zhejiang University, Hangzhou, Zhejiang 310058, China}
\email{liyuwen@zju.edu.cn}
\email{yupengw@zju.edu.cn}
\begin{document}
\maketitle
\begin{abstract}
In this article, we derive a novel convergence estimate for the weak POD-Greedy method with multiple POD modes and variable greedy thresholds in terms of the entropy numbers of the parametric solution manifold. Combining the POD with the Empirical Interpolation Method (EIM), we also propose an EIM-POD-Greedy method with entropy-based convergence analysis for simultaneously approximating parametrized target functions by separable approximants. Several numerical experiments are presented to demonstrate the effectiveness of the proposed algorithm compared to traditional methods. 
\end{abstract} 

\vspace{0.5em}\noindent\textbf{Keywords:} reduced basis method, POD-Greedy method, empirical interpolation method, entropy number, Kolmogorov width \par\addvspace\baselineskip

\vspace{0.5em}\noindent\textbf{MSC codes:} 
41A46, 41A65, 65J05, 65M12, 65D15\par\addvspace\baselineskip

\section{Introduction}
The Reduced Basis Method (RBM) is a popular numerical method for solving parametric Partial Differential Equations (PDEs) by accurately capturing essential features of solutions of the high-fidelity model through an easy-to-solve reduced model (see \cite{CohenDeVore2015,Hesthaven2022,Maday2006,Quarteroni2016,Veroy2012posteriori}). In general, RBMs consist of a costly offline stage and an efficient online stage. First the offline module constructs a low-dimensional reduced basis subspace approximating the high-fidelity solution manifold. Then
in the online stage, queried reduced basis solutions are rapidly computed in the reduced basis subspace, allowing for real-time numerical simulations. Common offline procedures for constructing reduced basis subspaces include the reduced basis greedy algorithm (cf.~\cite{BinevCohenDahmenDeVore2011,RozzaHuynhPatera2008}) and the Proper Orthogonal Decomposition (POD)(cf.~\cite{AtwellKing2004,KunischVolkwein2001}). Interested readers are also referred to other reduced order modeling techniques developed in, e.g., \cite{AzaiezBelgacem2018,CohenDahmen2020,LeeCarlberg2020,LiZikatanovZuo2024SISC,LiZikatanovZuo2024arXiv} etc.

For time-dependent problems, a standard RBM needs to uniformly approximate high-fidelity solution trajectories for each parameter. However, including all the snapshots along these trajectories for subspace construction is computationally infeasible due to the rapid growth in the dimension of the reduced basis subspace. A more practical approach is to gradually enlarge the reduced basis subspace by adding the first $m$ POD modes of the projection error at each greedy iteration, known as the POD-Greedy method (see \cite{Haasdonk2013,HaasdonkOhlberger2008}). In a different approach, the works \cite{DahmenPleskenWelper2014,UrbanPatera2014,Yano2014} developed  space-time RBMs by formulating dynamical PDEs as one-dimension higher stationary models. 

Classical convergence analysis of reduced basis greedy algorithms is based on the Kolmogorov $n$-width of the underlying solution manifold  (see \cite{BennerGugercinWillcox2015,BinevCohenDahmenDeVore2011,Buffa2012,Wojtaszczyk2015}). Following this line, \cite{Haasdonk2013} developed convergence rates of the weak POD-Greedy method for evolutionary problems in terms of the Kolmogorov width. Motivated by the recent work \cite{LiSiegel2024}, we shall present a novel error estimate of the weak POD-Greedy method using the entropy numbers of the convex hull of the solution manifold. Our analysis provides a direct comparison between the POD-Greedy error and the entropy numbers, while the classical result in \cite{Haasdonk2013} is a rate comparison. In addition, we are able to analyze the convergence of the weak POD-Greedy method with variable greedy threshold and multiple POD modes at each greedy step.

The efficiency of RBMs as well as other model reduction techniques e.g. \cite{BennerGugercinWillcox2015}, relies on the presence of affinely parametrized structures, which are not always available in practice. To address this issue, the Empirical Interpolation Method (EIM) \cite{BarraultMadayNguyenPatera2004} was developed for obtaining perturbed high-fidelity models with such affine parameter structures, see also, e.g., \cite{ChaturantabutSorensen2010,DrohmannHaasdonk2012,NegriManzoniAmsallem2015,NguyenPatera2008,Saibaba2020} for generalizations of the EIM. For a time-dependent parametrized family of coefficients, we shall develop an EIM-POD-Greedy method by combining the POD for temporal compression with the EIM in space direction, achieving higher efficiency for time-dependent simultaneous approximation of target functions. In addition, motivated by the framework in \cite{Li2024CGA}, we derive convergence rates of the proposed EIM-POD-Greedy method using the entropy numbers of the set of target functions, see Sections \ref{sec:PODEIM} and \ref{sec:NumExp} for details.

The rest of the paper is organized as follows. In Section \ref{sec:WeakPODGA}, we introduce the formulation of the weak POD-Greedy method. In Section \ref{sec:ConvergenceEntropy}, we derive the convergence rate analysis of the weak POD-Greedy method based on entropy numbers. Section \ref{sec:PODEIM} is devoted to the EIM-POD-Greedy method and its convergence analysis. In Section \ref{sec:NumExp}, we provide numerical experiments to illustrate the performance of the proposed algorithms.

\section{Weak POD-Greedy Method}\label{sec:WeakPODGA}
Let $V$ be a real Hilbert space equipped with the inner product $\langle\bullet, \bullet\rangle$ and norm $\|\bullet\|$. Given $T>0$ and an integer $J>0$, let $0=t_0<t_1<\cdots<t_J=T$ be a grid of the time interval $[0,T]$ with $t_j=j\tau$ and the step-size $\tau:=T/J$. For the set $\mathbb{I}:=\left\{t_j\right\}_{j=0}^J$ of time grid points,  
by $V_T:=L^2(\mathbb{I};V)$ we denote the Cartesian product space $V^{J+1}$ equipped with the inner product 
$$\langle u,v\rangle_{V_T}:=\sum_{j=0}^J\tau\langle u^j,v^j\rangle,$$ 
where $u=(u^0,u^1,\ldots,u^J)$, $v=(v^0,v^1,\ldots,v^J)$. Clearly $L^2(\mathbb{I};V)$ is a discretization of the Bochner-type space $L^2(0,T;V)$.

Given a $n$-dimensional subspace $V_n\subset V$,
let $P_{V_n}: V\rightarrow V_n$ be the orthogonal projection onto $V_n$. Similarly, $P_{V_{T,n}}: V_T\rightarrow V_{T,n}:=L^2(\mathbb{I};V_n)$ is the orthogonal projection onto $V_{T,n}$ with respect to $\langle\bullet,\bullet\rangle_{V_T}$. It is straightforward to verify that for $v\in V_T$, 
\begin{align*}
(P_{V_{T,n}}v)^j=P_{V_{n}}v^j,\quad j=0, 1, \ldots, J.
\end{align*}
To present POD-Greedy-type algorithms and convergence analysis, it is necessary to introduce a compact space-time solution manifold $\mathcal{M}_{T}\subset V_T$ of a parametric evolutionary PDE. We also consider the set $\mathcal{M}\subset V$ built upon slices of $\mathcal{M}_T$:
\begin{equation*}
\mathcal{M}=\left\{v^j: v\in\mathcal{M}_T,~j=0, 1, \ldots, J\right\}.
 \end{equation*}

\subsection{Example}\label{subsec:example}
On a physical domain $\Omega\subset\mathbb{R}^p$, a model problem of the POD-Greedy method is the following parametric parabolic problem: 
\begin{subequations}\label{parabolic}
\begin{align}
\partial_t u_{\mu}-\nabla\cdot(a_{\mu}\nabla u_{\mu})&=f\quad {\rm in}\quad\Omega\times(0,T],\\
u_{\mu}&=0\quad {\rm on} \quad\partial\Omega\times(0,T],\\
u_{\mu}&=g\quad {\rm on}\quad\Omega\times \left\{t=0\right\},
\end{align}
\end{subequations}
where the coefficient $a_{\mu}: [0,T]\rightarrow L^\infty(\Omega)$ depends on a varying parameter $\mu\in\mathcal{P}\subset\mathbb{R}^d$. For simplicity, we assume that \eqref{parabolic} is semi-discretized by the implicit Euler method in time with the numerical solution $u_{\tau,\mu}\approx u_\mu$.
In this case,  
\begin{equation*}
V=H_0^1(\Omega),\quad\mathcal{M}_T=\{u_{\tau,\mu}\in L^2(\mathbb{I};V): \mu\in\mathcal{P}\},
\end{equation*} 
and $u_{\tau,\mu}\in L^2(\mathbb{I};V)$ satisfies $u_{\tau,\mu}^0=u_0$ and 
\begin{equation}\label{semidiscrete}
\frac{u_{\tau,\mu}^j-u_{\tau,\mu}^{j-1}}{\tau}-\nabla\cdot(a_\mu(t_j)\nabla u^j_{\tau,\mu})=f(t_j), \quad1\leq j\leq J.
\end{equation}
If the space is further discretized by a finite element space $V_h\subset H_0^1(\Omega)$, then
\begin{equation*}
    V=V_h,\quad\mathcal{M}_T=\{u_{\tau,h,\mu}\in L^2(\mathbb{I};V_h): \mu\in\mathcal{P}\}.
\end{equation*}
Let $(\bullet,\bullet)$ denote the $L^2(\Omega)$ inner product. The fully discrete solution $u_{\tau,h,\mu}\in L^2(\mathbb{I};V_h)$ of \eqref{parabolic} solves
\begin{subequations}\label{fullydiscrete}
\begin{align}
\Big(\frac{u_{\tau,h,\mu}^j-u_{\tau,h,\mu}^{j-1}}{\tau},v_h\Big)+(a_\mu(t_j)\nabla u_{\tau,h,\mu}^j,\nabla v_h) &= (f(t_j),v_h),\\
(u_{\tau,h,\mu}^0-g,v_h)&=0,
\end{align}
\end{subequations}
for all $v_h\in V_h$ and $1\leq j\leq J$.

\subsection{POD-Greedy Method}
For a sequence $v=(v^0,v^1,\ldots,v^J)\in V_T$, the POD method aims to find a low-dimensional subspace that best approximates the snapshots $v^0, v^1, \ldots, v^J$ in the total energy norm $\|\cdot\|_{V_T}$. To introduce the POD, we need the correlation operator $C_v: V\rightarrow V$ defined by
\begin{equation}\label{operatorC}
C_v(w)=\sum_{j=0}^J \tau \langle v^j,w \rangle v^j.
\end{equation}
Clearly $C_v$ is a symmetric and positive-definite operator with range $R(C_v)={\rm span}\{v^0, v^1, \ldots, v^J\}$. Let $\lambda_0(v)\geq\lambda_1(v)\geq\cdots\geq\lambda_{J}(v)\geq 0$
be the largest $J$ eigenvalues of $C_v$ and $\varphi_i(v)$ a unit eigenvector associated to $\lambda_i(v)$.
For an integer $N\ll J$, the $N$-dimensional POD subspace is set as $W_N={\rm span}\left\{\varphi_0(v), \ldots, \varphi_{N-1}(v)\right\}$. The POD subspace satisfies the best low-rank approximation property
\begin{equation*}
    \sum_{j=0}^J\big\|v^j-P_{W_N}v^j\big\|^2=\min_{{\rm dim}W=N} \sum_{j=0}^J\big\|v^j-P_Wv^j\big\|^2=\sum_{i\geq N}\lambda_i(v).
\end{equation*}

Now we are in a position to present the weak POD-Greedy method for efficiently solving parametrized time-dependent PDEs, such as \eqref{parabolic} corresponding to many instances of the  parameter $\mu$.

\begin{algorithm}[thp]
\caption{Weak POD-Greedy Method}\label{alg:PODGA}       
\begin{algorithmic}[1]
\Statex \textbf{Input:} two integers $N$, $m$, a set of threshold constants $\{\gamma_n\}_{n\geq1}\subset (0,1]$;
 \Statex \textbf{Initialization:} $V_0=\{0\}, V_{T,0}=L^2 (\mathbb{I};V_0)$;  
 \Statex \textbf{For} {$n=1:N$}

       \Statex $\qquad$ select $u_n\in \mathcal{M}_T$ such that 
       \begin{equation*}
      \|u_n-P_{V_{T,n-1}}u_n\|_{V_T}\geq\gamma_n\sup\limits_{u\in \mathcal{M}_T}\|u-P_{V_{T,n-1}}u\|_{V_T};  
    \end{equation*}

     \Statex $\qquad$ set $r_n:=u_n-P_{V_{T,n-1}}u_n$; compute the $m$ leading eigen-pairs 
     \Statex $\qquad$ $\left\{(\lambda_n^1, f_n^1),\ldots,(\lambda_n^m, f_n^m)\right\}$ of $C_{r_n}$ with $\lambda_n^1\geq\cdots\geq\lambda_n^m$ and
     \[
     C_{r_n}(f_n^k)=\lambda_n^k f_n^k,\quad 1\leq k\leq m;
     \]

     \Statex $\qquad$ update the reduced basis subspace
    \begin{align*}
            V_n&:=V_{n-1}\oplus {\rm span} \left\{f_n^1,\ldots,f_n^m \right\},\\
            V_{T,n}&:=L^2 (\mathbb{I},V_n);
    \end{align*}
   \Statex \textbf{EndFor}
   \Statex \textbf{Output:} the reduced basis subspace $V_{T,N}$. 
   \end{algorithmic} 
\end{algorithm}

Compared with \cite{Haasdonk2013},  Algorithm \ref{alg:PODGA} makes use of a variable threshold $\gamma_n$ and enriches the reduced basis subspace by multiple POD modes at each iteration. The error of Algorithm \ref{alg:PODGA} is
\begin{equation*}
\sigma_n:=\sup_{u\in\mathcal{M}_T}\|u-P_{V_{T,n-1}}u\|_{V_T},
\end{equation*}
which is non-increasing as $n$ grows. In general, the weak greedy criterion 
\begin{equation}\label{greedy}
   \|u_n-P_{V_{T,n-1}}u_n\|_{V_T}\geq\gamma_n\sup_{u\in \mathcal{M}_T}\|u-P_{V_{T,n-1}}u\|_{V_T}
\end{equation}
in Algorithm \ref{alg:PODGA} is replaced by the practical maximization problem
\begin{equation}\label{estimator}
   u_n=\arg\max_{u\in \mathcal{M}_T}\Delta_{n-1}(u)
\end{equation}
where $\Delta_{n-1}(u)$ is an economic a posteriori error estimator that fulfills \begin{equation}\label{upperlowerbound}
    c_{n,1}\Delta_{n-1}(u)\leq\|u-P_{V_{T,n-1}}u\|_{V_T}\leq c_{n,2}\Delta_{n-1}(u)
\end{equation}
with $c_{n,1}, c_{n,2}>0$ being modest constants, 
see Section \ref{subsec:PODGreedyExp} for details. It is to see that \eqref{estimator} implies \eqref{greedy} with $\gamma_n=c_{n,1}/c_{n,2}$. 

Once $V_{T,N}$ is constructed, the parametric family of PDEs is approximately solved by the online module of the POD-Greedy method within $V_{T,N}$ to save computational cost. For example,
the numerical solution $u_{\tau,N,\mu}\in V_{T,N}$ of the POD-Greedy method for \eqref{parabolic} is determined by 
\begin{subequations}\label{reducedmodel}
\begin{align}
\Big(\frac{u_{\tau,N,\mu}^j-u_{\tau,N,\mu}^{j-1}}{\tau},v_N\Big)+(a_\mu(t_j)\nabla u_{\tau,N,\mu}^j,\nabla v_N) &= (f(t_j),v_N),\\
(u_{\tau,N,\mu}^0-g,v_N)&=0,
\end{align}
\end{subequations}
for any $v_N\in V_N$ and $1\leq j\leq J$.

\subsection{Properties of the POD-Greedy Method} 
In the end of this section, we summarize some properties of the POD-Greedy method that will be used in the error analysis. 
\begin{lemma}\label{sumlambda}
  For any $v=(v^0,\ldots,v^J)\in L^2(\mathbb{I};V)$, the eigenvalues of $C_v$ satisfy 
    \begin{equation*}
        \sum\limits_{i=0}^J\lambda_i (v)=\|v\|^2_{V_T}.
    \end{equation*}
\end{lemma}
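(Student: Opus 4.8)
The plan is to recognize $\sum_{i=0}^J\lambda_i(v)$ as the trace of $C_v$ and then evaluate that trace against a convenient orthonormal basis. First I would observe that, since the range $R(C_v)=\mathrm{span}\{v^0,\ldots,v^J\}$ has dimension at most $J+1$, the operator $C_v$ is of finite rank and therefore has at most $J+1$ nonzero eigenvalues. Ordering the eigenvalues decreasingly as $\lambda_0(v)\geq\lambda_1(v)\geq\cdots\geq 0$, every nonzero eigenvalue already appears among $\lambda_0(v),\ldots,\lambda_J(v)$, so the partial sum $\sum_{i=0}^J\lambda_i(v)$ captures the entire spectral sum (any further terms vanish). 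This reduces the claim to showing that the full spectral sum equals $\|v\|_{V_T}^2$.

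Next I would choose an orthonormal basis $\{\varphi_i\}$ of $V$ consisting of eigenvectors of $C_v$; such a basis exists because $C_v$ is self-adjoint and compact (indeed finite-rank), with eigenvectors of the zero eigenvalue supplied by an orthonormal basis of $\ker C_v$. Testing the eigenrelation $C_v\varphi_i=\lambda_i(v)\varphi_i$ against $\varphi_i$ and inserting the definition \eqref{operatorC} gives
\[
\lambda_i(v)=\langle C_v\varphi_i,\varphi_i\rangle=\sum_{j=0}^J\tau\,|\langle v^j,\varphi_i\rangle|^2.
\]
Summing over $i$ and interchanging the two nonnegative summations (Tonelli) then yields
\[
\sum_i\lambda_i(v)=\sum_{j=0}^J\tau\sum_i|\langle v^j,\varphi_i\rangle|^2=\sum_{j=0}^J\tau\,\|v^j\|^2=\|v\|_{V_T}^2,
\]
where the middle step is Parseval's identity in $V$ and the last is the definition of $\|\cdot\|_{V_T}$. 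Combined with the first step this is exactly the asserted identity.

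The computation itself is elementary; the only point requiring care — and the nearest thing to an obstacle — is the functional-analytic bookkeeping when $V$ is infinite-dimensional. There one must confirm that $C_v$ is trace-class so that the sum of eigenvalues genuinely coincides with the trace and the interchange of summations is legitimate. This is automatic from the finite-rank property established in the first step, so no essential difficulty remains.
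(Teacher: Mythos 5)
Your proof is correct and follows essentially the same route as the paper's: both evaluate $\lambda_i(v)=\langle C_v\varphi_i,\varphi_i\rangle=\sum_{j=0}^J\tau|\langle v^j,\varphi_i\rangle|^2$, interchange the order of summation, and conclude with Parseval's identity. The only difference is bookkeeping: the paper applies Parseval using an orthonormal eigenbasis of the range $R(C_v)$ (which contains each $v^j$), whereas you extend to a full eigenbasis of $V$ and invoke the finite-rank/trace observation to see that kernel vectors contribute nothing --- a slightly more careful handling of the case $\dim R(C_v)<J+1$, but not a different argument.
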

\begin{proof}
The definition of $C_v$ with $\varphi_i=\varphi_i(v)$ implies
\begin{equation*}
    \lambda_i(v)=\langle C_v(\varphi_i),\varphi_i\rangle=\sum\limits_{j=0}^J\tau|\langle v^j,\varphi_i\rangle|^2.
\end{equation*}
The eigenvectors  $\{\varphi_i\}_{0\leq i\leq J}$ of the symmetric operator $C_v$ form an orthonormal basis of its range. As a result, we have
    \begin{equation*}
       \sum\limits_{i=0}^J\lambda_i(v) =\sum\limits_{j=0}^J\tau\sum_{i=0}^J|\langle v^j,\varphi_i\rangle|^2=\tau\sum_{j=0}^J\|v^j\|^2,
    \end{equation*}
which completes the proof.
\end{proof}

Next we check a property of the operator $C_{r_n}$ (see \eqref{operatorC}), where $r_n=u_n-P_{V_{T,n-1}}u_n$ is the residual in the weak POD-Greedy method in Algorithm \ref{alg:PODGA}. For $1\leq k\leq m$, direct calculation shows
\begin{equation}\label{eigenpairrelation}
\begin{aligned}
    \tau\sum_{j=0}^J\langle r_n^j,f_n^k \rangle u_n^j&=\tau\sum_{j=0}^J \langle r_n^j ,f_n^k \rangle P_{V_{n-1}}u_n^j+\tau\sum_{j=0}^J \langle r_n^j ,f_n^k \rangle r_n^j\\
    &=\tau\sum\limits_{j=0}^J \langle r_n^j ,f_n^k \rangle P_{V_{n-1}}u_n^j+\lambda_n^k f_n^k.
\end{aligned}
\end{equation}
By the orthogonality of $\{f_i^k\}_{1\leq i\leq n-1,1\leq k\leq m}$, we obtain 
\begin{equation*}
    P_{V_{n-1}}u_n^j=\sum_{i=1}^{n-1}\sum_{k=1}^{m} \langle u_n^j,f_i^k \rangle  f_i^k
\end{equation*}
and thus the following identity
\begin{equation}\label{vik}
\begin{aligned}
v_n^k&:=\tau\sum_{j=0}^J\langle r_n^j ,f_n^k\rangle u_n^j\\
&=\tau\sum_{i=1}^{n-1}\sum_{s=1}^{m}\sum_{j=0}^J\langle r_n^j,f_n^k\rangle  \langle u_n^j,f_i^s\rangle f_i^s+\lambda_n^k f_n^k.
\end{aligned}
\end{equation}
Using the matrix-vector notation, the above identity translates into 
\begin{equation}\label{vAf}
    \begin{pmatrix}
     v_1^1 \\
     v_1^2 \\
     \vdots\\
     v_n^{m} 
     \end{pmatrix}=
     A\begin{pmatrix}
f_1^1  \\
f_1^2  \\
\vdots \\
f_n^{m}\\
\end{pmatrix}, 
\end{equation}
where $A=(a_{ij})\in \mathbb{R}^{mn\times mn}$ is a lower triangular matrix given by
\begin{equation}\label{matrixA}
    A=\begin{pmatrix}
        &{A_1} &{0} &{\cdots} &{0}\\
        &{B_2^1} &{A_2} &{\cdots} &{0}\\
        &{\vdots} &{\vdots} &{\ddots} &{\vdots}\\
        &{B_n^1} &{B_n^2} &{\cdots} &{A_n}
    \end{pmatrix},
\end{equation}
where $A_i={\rm diag}(\lambda_i^1, \lambda_i^2, \ldots, \lambda_i^m)$ for $1\leq i\leq n$
and 
\begin{equation*}
    B_i^s=\tau\sum_{j=0}^J\begin{pmatrix}
    &{\langle r_i^j,f_i^1\rangle\langle u_i^j, f_s^1\rangle} &{\langle r_i^j,f_i^1\rangle\langle u_i^j,f_s^2\rangle} &{\cdots} &{\langle r_i^j,f_i^1\rangle\langle u_i^j,f_s^m\rangle}\\
    &{\langle r_i^j,f_i^2\rangle\langle u_i^j,f_s^1\rangle} &{\langle r_i^j,f_i^2\rangle\langle u_i^j,f_s^2\rangle} &{\cdots} &{\langle r_i^j,f_i^2\rangle\langle u_i^j,f_s^m\rangle}\\
    &{\vdots} &{\vdots} &{\ddots} &{\vdots}\\
    &{\langle r_i^j,f_i^m\rangle\langle u_i^j,f_s^1\rangle} &{\langle r_i^j,f_i^m\rangle\langle u_i^j,f_s^2\rangle} &{\cdots} &{\langle r_i^j,f_i^m\rangle\langle u_i^j,f_s^m\rangle}
    \end{pmatrix}
\end{equation*}
for each $2\leq i\leq n$ and $1\leq s\leq n-1$. In addition, we have the following identity by \eqref{vAf},
\begin{align}\label{subspaceequal}
   V_i:={\rm span}\left\{v_1^1,v_1^2,\ldots,v_i^m\right\}={\rm span}\left\{f_1^1,f_1^2,\ldots,f_i^m\right\},\quad1\leq i\leq n.
\end{align}

\section{Entropy-Based Convergence Estimate}\label{sec:ConvergenceEntropy}
In this section, we derive a novel convergence estimate for the weak POD-Greedy method in Algorithm \ref{alg:PODGA}. Throughout the rest of this paper, by $A\lesssim B$ (resp.~$A\eqsim B$) we mean $A\leq CB$ (resp.~$A\lesssim$ and $B\lesssim A$), where $C$ is a positive generic constant that may change from line to line but independent of the parametric function class $\mathcal{M}_T$.

\subsection{Entropy Numbers and Convergence Analysis}
Given a set $K$,
the symmetric convex hull of $K$ is 
\begin{align*}
    {\rm co}(K):=\overline{\left\{\sum_i c_i g_i:c_i\in\mathbb{R},\sum_i |c_i|\le 1, g_i\in K \ \text{for each}\ i\right\}}.
\end{align*}
Our analysis hinges on the entropy numbers of ${\rm co}(K)$: 
\begin{align*}
    \varepsilon_n({\rm co}(K)):=\inf\left\{\varepsilon>0: {\rm co}(K)\ \text{is}\ \text{covered}\ \text{by}\ 2^n\ \text{balls}\ \text{of}\ \text{radius}\ \varepsilon\right\}.
\end{align*}
When $K$ is compact, $\varepsilon_n({\rm co}(K))$ converges to zero as $n\to\infty$ (see, e.g., \cite{Lorentz1996}).
Let $S_n$ denote the volume of a unit ball in $\mathbb{R}^n$. Next we introduce a key entropy-based lemma which is essentially proved in \cite{LiSiegel2024}.
\begin{lemma}\label{lemma:comparison}
For the lower triangular system \eqref{vAf}, it holds that
\begin{align*}
   \left (\prod_{i=1}^n\prod_{k=1}^m\frac{\lambda_i^k}{\tau\sum_{j=0}^J|\langle r_i^j,f_i^k\rangle|}\right)^{\frac{1}{mn}}\leq \big ((mn)!S_{mn}\big)^{\frac{1}{mn}} \varepsilon_{mn}({\rm co}(\mathcal{M})).
\end{align*}
\end{lemma}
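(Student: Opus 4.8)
The plan is to carry out a volume comparison in the $mn$-dimensional reduced space $V_n={\rm span}\{f_i^k\}$: I will bound the volume of a cross-polytope inscribed in ${\rm co}(\mathcal M)$ from below by the left-hand side of the claim, bound it from above by the volume produced from an $\varepsilon_{mn}$-covering, and then compare. The key observation is that the left-hand side is exactly a normalized parallelepiped volume, which the lower triangular structure \eqref{vAf} makes transparent.

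First I would record the two structural facts that turn the left-hand side into a volume. By \eqref{vik}, each $v_i^k=\tau\sum_{j=0}^J\langle r_i^j,f_i^k\rangle u_i^j$ is a linear combination of the slices $u_i^j\in\mathcal M$ whose coefficients have $\ell^1$-norm exactly $c_{i,k}:=\tau\sum_{j=0}^J|\langle r_i^j,f_i^k\rangle|$; hence the normalized vector $\tilde w_{i,k}:=v_i^k/c_{i,k}$ lies in ${\rm co}(\mathcal M)$ (if $c_{i,k}=0$ then $\lambda_i^k=0$ and that factor is dropped, so one may assume all $c_{i,k}>0$). After relabeling the $mn$ pairs $(i,k)$ as $1,\dots,mn$, identity \eqref{vAf} reads $w=Af$ with $\{f_i^k\}$ orthonormal and $A$ lower triangular as in \eqref{matrixA}; thus the coordinate matrix of $\{v_i^k\}$ in the orthonormal basis $\{f_i^k\}$ is $A$ itself, its Gram matrix is $AA^\top$, and the $mn$-dimensional volume of the parallelepiped spanned by $\{v_i^k\}$ equals $|\det A|=\prod_{i=1}^n\prod_{k=1}^m\lambda_i^k$. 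Rescaling the $(i,k)$-th edge by $c_{i,k}^{-1}$ shows that the parallelepiped $\tilde P$ spanned by the $\tilde w_{i,k}$ has ${\rm vol}(\tilde P)=\prod_{i=1}^n\prod_{k=1}^m\lambda_i^k/c_{i,k}$.

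Next I would pass to the inscribed cross-polytope and cover it. Since ${\rm span}\{v_i^k\}=V_n$ by \eqref{subspaceequal} has dimension $mn$, the $\tilde w_{i,k}$ are linearly independent, so the symmetric convex hull $C:={\rm co}\{\tilde w_{i,k}\}$ is full-dimensional in $V_n$ with ${\rm vol}(C)=\frac{2^{mn}}{(mn)!}\,{\rm vol}(\tilde P)$. Because ${\rm co}(\mathcal M)$ is symmetric and convex and contains each $\tilde w_{i,k}$, and these points already lie in $V_n$, the orthogonal projection $P_{V_n}$ gives $C\subseteq P_{V_n}({\rm co}(\mathcal M))$. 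As $P_{V_n}$ is $1$-Lipschitz we have $\varepsilon_{mn}(P_{V_n}({\rm co}(\mathcal M)))\le\varepsilon_{mn}({\rm co}(\mathcal M))$, and any subset of $\mathbb R^{mn}$ covered by $2^{mn}$ balls of radius $\varepsilon$ has volume at most $2^{mn}S_{mn}\varepsilon^{mn}$.

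Finally I would chain the three estimates into
\[
\frac{2^{mn}}{(mn)!}\prod_{i=1}^n\prod_{k=1}^m\frac{\lambda_i^k}{c_{i,k}}={\rm vol}(C)\le{\rm vol}\big(P_{V_n}({\rm co}(\mathcal M))\big)\le 2^{mn}S_{mn}\,\varepsilon_{mn}({\rm co}(\mathcal M))^{mn},
\]
cancel the factor $2^{mn}$, and take $mn$-th roots to reach the claim. The main obstacle is not any single step but making the three volume identities line up exactly at the common dimension $mn$: the Gram/lower-triangular determinant $|\det A|=\prod\lambda_i^k$, the cross-polytope formula $2^{mn}/(mn)!$, and the covering volume $2^{mn}S_{mn}\varepsilon_{mn}^{mn}$ must use the same normalization so that the $2^{mn}$ factors cancel cleanly. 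A secondary technical point is the degenerate case $c_{i,k}=0$ (equivalently $\lambda_i^k=0$), where the corresponding coordinate is discarded so that the inscribed cross-polytope stays full-dimensional and the inequality holds trivially.
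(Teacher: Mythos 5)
Your proof is correct and follows essentially the same route as the paper's: both normalize the vectors $v_i^k$ by $c_{i,k}=\tau\sum_{j=0}^J|\langle r_i^j,f_i^k\rangle|$ to obtain points of ${\rm co}(\mathcal{M})$, take their symmetric convex hull (a cross-polytope in the $mn$-dimensional space $V_n$), and compare its volume $\frac{2^{mn}}{(mn)!}\prod_{i,k}\lambda_i^k/c_{i,k}$ against the covering bound $2^{mn}S_{mn}\varepsilon_{mn}^{mn}$. The only cosmetic differences are that you extract $\prod_{i,k}\lambda_i^k$ as $|\det A|$ directly from the triangular structure where the paper uses the equivalent Gram--Schmidt identity $\|v_i^k-P_{V_{i-1}}v_i^k\|=\lambda_i^k$, and that you route the covering step through the projection $P_{V_n}$ (a slightly more careful handling of the infinite-dimensional ambient space, plus the degenerate case $\lambda_i^k=0$) where the paper covers the set in the ambient space directly.
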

\begin{proof}
We consider the symmetric convex set \[
S={\rm co}\left(\left\{\frac{v_1^1}{\tau\sum_{j=0}^J |\langle r_1^j,f_1^1\rangle|}, \ldots, \frac{v_n^m}{\tau\sum_{j=0}^J |\langle r_n^j,f_n^m \rangle|}\right\}\right).\]
After checking the definition of $v_i^k$ in \eqref{vik}, we observe that
\begin{align*}
    \frac{v_i^k}{\tau\sum_{j=0}^J|\langle r_i^j,f_i^k\rangle|}\in {\rm co}(\mathcal{M}),\quad1\leq i\leq n,\quad1\leq k\leq m,
\end{align*}
which implies that $S\subset {\rm co}(\mathcal{M})$ and 
\begin{align*}
 \varepsilon_n(S)\leq \varepsilon_n({\rm co}(\mathcal{M})).
\end{align*} 
We note that $S$ can be identified as a skew simplex in the Euclidean space $\mathbb{R}^{mn}$ via a bijection from $V_n={\rm span}\left\{f_1^1,f_1^2,\ldots,f_n^m\right\}$ to $\mathbb{R}^{mn}$. In this way, one could compute its volume 
\begin{equation*}
    {\rm vol}(S)=\frac{2^{mn}}{(mn)!}\prod_{i=1}^n\prod_{k=1}^m\frac{\|v_i^k-P_{V_{i-1}}v_i^k\|}{\tau\sum_{j=0}^J|\langle r_i^j,f_i^k\rangle|}.
\end{equation*}
On the one hand, it follows from the definition of $\varepsilon_{mn}:=\varepsilon_{mn}({\rm co}(\mathcal{M}))$ that $S$ can be covered by $2^{mn}$ balls of radius $\varepsilon_{mn}$, thus
\begin{equation}\label{volcomparison}
\frac{2^{mn}}{(mn)!}\prod_{i=1}^n\prod_{k=1}^m\frac{\|v_i^k-P_{V_{i-1}}v_i^k\|}{\tau\sum_{j=0}^J|\langle r_i^j,f_i^k\rangle|} \leq 2^{mn} S_{mn}\varepsilon_{mn}^{mn}.
\end{equation}
On the other hand, by the definition of $v_i^k$ in \eqref{vik}, we have
\begin{align*}
    v_i^k-P_{V_{i-1}}v_i^k &=v_i^k-\tau\sum_{t=1}^{i-1}\sum_{s=1}^m\sum_{j=0}^J\langle r_n^j,f_n^k\rangle\langle u_n^j,f_i^s\rangle f_t^s=\lambda_i^k f_i^k,
\end{align*}
which yields
\begin{equation}\label{projectionerror}
    \|v_i^k-P_{V_{i-1}}v_i^k\|=
    \lambda_i^k.
\end{equation}
Combining \eqref{volcomparison} and \eqref{projectionerror} completes the proof.
\end{proof}

Using Lemma \ref{lemma:comparison}, we can directly compare the RBM errors $\left\{\sigma_n\right\}_{n\geq 1}$ with the entropy numbers of ${\rm co}(\mathcal{M})$.

\begin{theorem}\label{thm:sigmaepsilon}
Let $\theta_n:=\lambda_n^m/\lambda_n^1$ in Algorithm \ref{alg:PODGA}. For each  $n\geq1$, it holds that
    \begin{align*} \sigma_n\leq\Big(\prod_{i=1}^n\gamma_i\sqrt{\theta_i}\Big)^{-\frac{1}{n}}\sqrt{(J+1)T}((mn)!S_{mn})^{\frac{1}{mn}}\varepsilon_{mn} ({\rm co}(\mathcal{M})).
    \end{align*}
\end{theorem}
\begin{proof} 
First we note that for $1\leq i\leq n$ and $1\leq k\leq m$,
\begin{equation}\label{thm:sigmaepsilon:step1}
\begin{aligned}
\sum_{j=0}^J\tau|\langle r_i^j,f_i^k\rangle|&\leq\sqrt{\sum_{j=0}^J \tau}\cdot\sqrt{\sum_{j=0}^J\tau|\langle r_i^j,f_i^k\rangle|^2}\\
&=\sqrt{T|\langle C_{r_i}(f_i^k),f_i^k\rangle|}=\sqrt{T\lambda_i^k}.
\end{aligned}
\end{equation}
Using \eqref{greedy} and Lemma \ref{sumlambda}, we have
\begin{equation}\label{thm:sigmaepsilon:step2}
\lambda_i^k \geq\theta_i\lambda_i^1\geq \frac{\theta_i\gamma_i^2\sigma_i^2}{J+1}.
\end{equation}
It then follows from \eqref{thm:sigmaepsilon:step1} and \eqref{thm:sigmaepsilon:step2} that 
\begin{align}\label{diagonal}
    \frac{\lambda_i^k}{\sum_{j=0}^J \tau|\langle r_i^j,f_i^k\rangle|}
    \geq \sqrt{\frac{\lambda_i^k}{T}}\geq \frac{\sqrt{\theta_i}\gamma_i\sigma_i}{\sqrt{(J+1)T}}.
\end{align}
Combining the above inequality with Lemma \ref{lemma:comparison} yields
\begin{align*}
\Big(\prod\limits_{i=1}^n\sigma_i^m\Big)^{\frac{1}{mn}}\leq\Big(\prod_{i=1}^n\gamma_i\sqrt{\theta_i}\Big)^{-\frac{1}{n}}\sqrt{(J+1)T}\big((mn)!S_{mn}\big)^{\frac{1}{mn}}\varepsilon_{mn} ({\rm co}(\mathcal{M})).
\end{align*}
Finally noting $\sigma_1\geq \sigma_2 \geq \cdots\geq\sigma_n$ completes the proof.
\end{proof}

Using Theorem \ref{thm:sigmaepsilon} and the well-known asymptotic formulae
\begin{align*}
\lim_{n\to\infty}\frac{n!}{\sqrt{2\pi n}(\frac{n}{{\rm e}})^n}=1,\quad\lim_{n\to\infty}\frac{S_n}{\frac{1}{\sqrt{\pi n}}(\frac{2\pi{\rm e}}{n})^{\frac{n}{2}}}=1,
\end{align*}
we obtain a transparent entropy-based convergence rate of the POD-Greedy method.
\begin{corollary}\label{cor:sigmaepsilon}
    There exists a constant $C=C(J,T,m)>0$ such that for $n\geq 1$, 
    \[
      \sigma_n\leq C\Big(\prod_{i=1}^n\gamma_i\sqrt{\theta_i}\Big)^{-\frac{1}{n}}\sqrt{n}\varepsilon_{mn} ({\rm co}(\mathcal{M})).
    \]
\end{corollary}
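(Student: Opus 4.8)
The plan is to treat this as a direct consequence of Theorem \ref{thm:sigmaepsilon}: the only work is to show that the prefactor $\sqrt{(J+1)T}\,\big((mn)!S_{mn}\big)^{1/(mn)}$ appearing there is bounded by $C(J,T,m)\sqrt{n}$ uniformly in $n\geq 1$. Since $\sqrt{(J+1)T}$ is already a constant depending only on $J$ and $T$, the whole matter reduces to analyzing the growth of $\big((mn)!S_{mn}\big)^{1/(mn)}$ as a function of $n$.

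First I would substitute the two stated asymptotic formulae into the product $(mn)!\,S_{mn}$. Writing $N=mn$ and applying $N!\sim\sqrt{2\pi N}(N/\mathrm{e})^N$ together with $S_N\sim(\pi N)^{-1/2}(2\pi\mathrm{e}/N)^{N/2}$, the polynomial prefactors combine into the harmless constant $\sqrt 2$, while the exponential parts collapse: after cancelling the powers of $N$ and $\mathrm{e}$ one finds $(N/\mathrm{e})^N\,(2\pi\mathrm{e}/N)^{N/2}=(2\pi N/\mathrm{e})^{N/2}$. Taking the $1/N$-th root and letting $N\to\infty$ then gives
\[
\big((mn)!\,S_{mn}\big)^{1/(mn)}\sim\sqrt{\tfrac{2\pi m}{\mathrm{e}}}\,\sqrt{n},
\]
so the quantity of interest is asymptotically a constant depending only on $m$ times $\sqrt n$.

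To upgrade this asymptotic statement into the uniform bound required by the corollary, I would observe that the sequence $a_n:=\big((mn)!\,S_{mn}\big)^{1/(mn)}/\sqrt n$ consists of finite positive numbers converging to the finite positive limit $\sqrt{2\pi m/\mathrm{e}}$; hence $\sup_{n\geq 1}a_n<\infty$. Setting $C:=\sqrt{(J+1)T}\,\sup_{n\geq 1}a_n$, which depends only on $J$, $T$, and $m$, and inserting the resulting estimate $\sqrt{(J+1)T}\,\big((mn)!S_{mn}\big)^{1/(mn)}\leq C\sqrt n$ into the bound of Theorem \ref{thm:sigmaepsilon} yields the claim.

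I do not expect a genuine obstacle here; the corollary is essentially a reformulation of Theorem \ref{thm:sigmaepsilon} with the explicit factorial–volume prefactor replaced by its sharp $\sqrt n$ growth. The only points requiring care are the cancellation in the exponential factors when combining Stirling's formula with the ball-volume asymptotics, and the observation that convergence of $a_n$ to a finite \emph{limit} (rather than a mere asymptotic rate) is precisely what licenses passing from the relation ``$\sim\sqrt n$'' to a bound valid for every $n\geq 1$.
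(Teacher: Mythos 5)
Your proposal is correct and follows exactly the paper's route: the paper derives the corollary from Theorem \ref{thm:sigmaepsilon} together with the same Stirling and ball-volume asymptotics, which yield $\big((mn)!S_{mn}\big)^{1/(mn)}\sim\sqrt{2\pi m/\mathrm{e}}\,\sqrt{n}$, exactly as in your computation. Your additional remark that convergence of the normalized sequence to a finite limit is what upgrades the asymptotic relation to a uniform bound for all $n\geq 1$ is a correct filling-in of a detail the paper leaves implicit.
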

We note that Corollary \ref{cor:sigmaepsilon} does not imply convergence of the POD-Greedy method when the underlying sliced solution manifold is extremely massive, i.e., $\varepsilon_n({\rm co}(\mathcal{M}))=O(n^{-\alpha})$ with $\alpha\leq\frac{1}{2}$. However, such a pessimistic bound of entropy numbers  rarely happens in reduced order modeling of elliptic and parabolic equations, see \cite{CohenDeVore2015} and the end of Section \ref{sec:ConvergenceNwidth}.

In the worst-case scenario, the POD-Greedy method with $m\geq 2$ will not provide higher numerical accuracy than the case $m=1$. When $\lambda_n^2=0$, the inclusion of multiple modes would introduce redundancy without enhancing the approximation accuarcy of the reduced basis subspace. We shall make a thorough numerical comparison among POD-Greedy methods with different values of $m$ in Section \ref{sec:NumExp}.

Another natural question is whether the order of convergence in Corollary \eqref{cor:sigmaepsilon} is sharp or not. In the following, we give an affirmative answer to this question.  
\begin{proposition}
    There exists a compact set $\mathcal{M}_T\subset L^2 (\mathbb{I};\ell^2 (\mathbb{N}))$ such that the weak POD-Greedy method with $m=1$ and $\gamma_n\equiv1$ satisfies
    \begin{equation*}
        \sigma_n\eqsim\sqrt{n}\varepsilon_n ({\rm co}(\mathcal{M}))_{\ell^2(\mathbb{N})}.
    \end{equation*}
\end{proposition}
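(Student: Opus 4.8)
The plan is to reduce the POD-Greedy dynamics to a purely spatial (stationary) weak greedy algorithm, for which both the greedy error and the entropy numbers of the convex hull can be pinned down explicitly. Fix any time grid $\mathbb{I}=\{t_j\}_{j=0}^J$ and let $\{e_k\}_{k\ge1}$ be the canonical basis of $\ell^2(\mathbb{N})$. I would set $\mathcal{W}=\{a_ke_k:k\ge1\}\cup\{0\}$ with $a_k=k^{-s}$ for a fixed $s>0$, and take the space-time manifold to consist of the time-constant trajectories $\mathcal{M}_T=\{(w,w,\dots,w):w\in\mathcal{W}\}$. Since $a_k\downarrow0$, the set $\mathcal{W}$ is compact in $\ell^2(\mathbb{N})$ and hence so is $\mathcal{M}_T$, and by construction the sliced manifold is exactly $\mathcal{M}=\mathcal{W}$. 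For a time-constant $v=(w,\dots,w)$ the correlation operator is $C_v(u)=(J+1)\tau\langle w,u\rangle w$, which has the single nonzero eigenvalue $(J+1)\tau\|w\|^2$ with eigenvector $w/\|w\|$; consequently the single ($m=1$) POD mode extracted at each step of Algorithm \ref{alg:PODGA} is precisely the normalized residual direction. Together with $\|v-P_{V_{T,n-1}}v\|_{V_T}^2=(J+1)\tau\|w-P_{V_{n-1}}w\|^2$, this shows that the weak POD-Greedy method with $m=1$, $\gamma_n\equiv1$ reproduces the classical pure greedy algorithm on $\mathcal{W}\subset\ell^2(\mathbb{N})$, with $\sigma_n=\sqrt{(J+1)\tau}\,g_n$, where $g_n:=\sup_{w\in\mathcal{W}}\|w-P_{V_{n-1}}w\|$.

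Next I would compute $g_n$ exactly. Because $a_1>a_2>\cdots$, the exact maximizer at the first step is $a_1e_1$, so $V_1={\rm span}\{e_1\}$; inductively the algorithm captures $e_1,\dots,e_{n-1}$, and as the points $a_ke_k$ are mutually orthogonal, the residual of $a_ke_k$ against $V_{n-1}={\rm span}\{e_1,\dots,e_{n-1}\}$ equals $a_k$ for $k\ge n$ and vanishes otherwise. Hence $g_n=\max_{k\ge n}a_k=a_n=n^{-s}$, so that $\sigma_n\eqsim n^{-s}$.

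The core of the argument is to show $\varepsilon_n({\rm co}(\mathcal{M}))\eqsim n^{-s-1/2}$. Here ${\rm co}(\mathcal{W})=\{x\in\ell^2(\mathbb{N}):\sum_k k^s|x_k|\le1\}$ is the image of the $\ell^1$-ball under the diagonal map $D={\rm diag}(k^{-s})$. For the lower bound I would restrict to the first $n$ coordinates: the section $\{x:{\rm supp}\,x\subset\{1,\dots,n\},\ \sum_{k\le n}k^s|x_k|\le1\}$ is contained in ${\rm co}(\mathcal{W})$ and is an $n$-dimensional cross-polytope of volume $\frac{2^n}{n!}\prod_{k\le n}a_k$. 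If $2^n$ balls of radius $\varepsilon$ cover ${\rm co}(\mathcal{W})$ they cover this body, so a volume comparison gives
\[
\varepsilon_n({\rm co}(\mathcal{W}))\gtrsim\frac{\big(\prod_{k\le n}a_k\big)^{1/n}}{(n!\,S_n)^{1/n}}.
\]
Using $(\prod_{k\le n}a_k)^{1/n}=(n!)^{-s/n}\eqsim n^{-s}$ and the asymptotics $(n!\,S_n)^{1/n}\eqsim\sqrt n$ already recorded before Corollary \ref{cor:sigmaepsilon}, the right-hand side is $\eqsim n^{-s-1/2}$. For the matching upper bound $\varepsilon_n({\rm co}(\mathcal{W}))\lesssim n^{-s-1/2}$ I would invoke the sharp entropy numbers of the diagonal operator $D:\ell^1\to\ell^2$, which for $a_k=k^{-s}$ decay like $n^{-s-1/2}$; alternatively one can cover ${\rm co}(\mathcal{W})$ by splitting into a head of $N\eqsim n^{1+1/(2s)}$ coordinates (whose complementary tail has norm $\le N^{-s}\le n^{-s-1/2}$ on ${\rm co}(\mathcal{W})$) and covering the head cross-polytope via the classical entropy estimates for ${\rm id}:\ell_1^N\to\ell_2^N$.

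Combining the three computations yields $\sigma_n\eqsim n^{-s}=\sqrt n\,n^{-s-1/2}\eqsim\sqrt n\,\varepsilon_n({\rm co}(\mathcal{M}))$, which is the claim. The routine parts are the reduction, the greedy-error computation, and the volumetric lower bound, the last of which reuses only the Stirling and volume asymptotics already present in the paper. The main obstacle is the sharp, logarithm-free upper entropy estimate for the convex hull: a naive head–tail covering loses a $\sqrt{\log n}$ factor through the critical-regime entropy of the unweighted cross-polytope ${\rm id}:\ell_1^N\to\ell_2^N$, so one must either appeal to the refined two-sided entropy asymptotics for $\ell^1\to\ell^2$ diagonal operators or carry out a weight-adapted covering that exploits the decay $a_k=k^{-s}$ to remove the logarithm.
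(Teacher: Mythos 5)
Your proposal is correct, and its core construction is the same as the paper's: the paper also takes orthogonal single-coordinate trajectories, $u_i=({\rm e}^{-\lambda t_0}x_ie_i,\ldots,{\rm e}^{-\lambda t_J}x_ie_i)$ with $x_i=2^{-k\alpha}$ constant on dyadic blocks (so $x_i\eqsim i^{-\alpha}$), observes that orthogonality forces the POD modes to be $f_n=\pm e_n$, and computes $\sigma_n\eqsim n^{-\alpha}$ exactly as in your reduction to a stationary pure greedy algorithm (your time-constant trajectories even simplify this slightly, since then $\mathcal{M}=\mathcal{W}$ outright and no sub-additivity over time slices is needed; your inclusion of $0$ for compactness is also a point of care the paper glosses over). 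Where you genuinely diverge is in assembling the equivalence. You prove a two-sided entropy estimate $\varepsilon_n({\rm co}(\mathcal{M}))\eqsim n^{-s-1/2}$: the lower bound via the cross-polytope volume comparison (correct, and essentially the same volumetric argument as Lemma \ref{lemma:comparison} restricted to coordinate sections), and the upper bound via sharp entropy asymptotics of diagonal operators $\ell^1\to\ell^2$ or a weight-adapted covering. The paper never proves an entropy lower bound at all: it establishes only the upper bound $\varepsilon_n({\rm co}(\mathcal{M}))\lesssim n^{-1/2-\alpha}$, by citing Proposition 3.5 of \cite{LiSiegel2024} for each slice ${\rm co}(\{x_ie_i\})$ (this is precisely why it chooses block-dyadic $x_i$, to match that proposition's hypothesis) together with sub-additivity of entropy numbers, and then obtains the reverse inequality $\sigma_n\lesssim\sqrt{n}\,\varepsilon_n({\rm co}(\mathcal{M}))$ for free from Corollary \ref{cor:sigmaepsilon} with $m=1$, $\gamma_i=\theta_i=1$. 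So your volumetric step, while valid and making the argument more self-contained, is logically redundant: once you know $\sigma_n\eqsim n^{-s}$ and $\varepsilon_n\lesssim n^{-s-1/2}$, Corollary \ref{cor:sigmaepsilon} closes the loop. Both routes hinge on the same hard ingredient — the logarithm-free upper entropy bound — which you correctly identify as the main obstacle; your two proposed fixes are both viable (a dyadic, weight-adapted allocation of covering bits across blocks does remove the $\sqrt{\log n}$ loss of the naive head--tail covering, and that is in substance what the cited proposition of \cite{LiSiegel2024} provides), so leaving it as a citation puts you on the same footing as the paper itself.
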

\begin{proof}
Let $V=\ell^2(\mathbb{N})$ and $(x_i)_{i=1}^\infty\in V$ be a sequence given by
\[
x_i=2^{-k\alpha}\quad\text{ when }2^{k-1}\leq i\leq 2^k-1
,k\geq 1\] 
for some $\alpha>0$ with $x_i\eqsim i^{-\alpha}$ by an asymptotic argument. Given $\lambda >0$, we consider the compact set 
\[
\mathcal{M}_T=\left\{u_i\in V^{J+1}: u_i=({\rm e}^{-\lambda t_0} x_i e_i, \ldots, {\rm e}^{-\lambda t_J} x_i e_i),~ i\geq 1\right\},
\]
where $e_i$ is the $i$-th unit vector in $\ell^2(\mathbb{N})$. 
By the orthogonality of $\left\{e_i\right\}_{i\geq 1}$, we indeed obtain the POD mode $f_n=e_n$ (or $f_n=-e_n$) at the $n$-th iteration of Algorithm \ref{alg:PODGA}. 
The weak POD-Greedy method satisfies
\begin{equation}\label{sigmancxn}
    \sigma_n=\big\|u_n-P_{V_{T,n-1}}u_n\big\|_{V_T}=c|x_n|\eqsim n^{-\alpha},
\end{equation}
where $c=\sqrt{\sum_{j=0}^J {\rm e}^{-2\lambda t_j}}$. For $0\leq j\leq J$, let $\mathcal{M}_j=\big\{e^{-\lambda t_j} x_i e_i:i\geq 1\big\}$ and recall that $\mathcal{M}:=\bigcup_{j=0}^J\mathcal{M}_j$. It follows from Proposition 3.5 of \cite{LiSiegel2024} that
\begin{equation*}
\varepsilon_n({\rm co}(\mathcal{M}_j))_{\ell^2(\mathbb{N})}\leq \varepsilon_n\big({\rm co}(\{x_i e_i:i\geq 1\})\big)_{\ell^2 (\mathbb{N})}\lesssim n^{-\frac{1}{2}-\alpha}.
\end{equation*}
The sub-additivity of the entropy numbers (cf.~\cite{Lorentz1996}) further implies that
\begin{equation*}
\varepsilon_{(J+1)n}({\rm co}(\mathcal{M}))_{\ell^2 (\mathbb{N})}\leq \sum\limits_{j=0}^J \varepsilon_n ({\rm co}(\mathcal{M}_j))_{\ell^2 (\mathbb{N})}\lesssim n^{-\frac{1}{2}-\alpha}.
 \end{equation*}
and thus 
\begin{equation}\label{epsilonalpha}
\varepsilon_n ({\rm co}(\mathcal{M}))_{\ell^2 (\mathbb{N})}\lesssim n^{-\frac{1}{2}-\alpha}.
 \end{equation}
Combining \eqref{sigmancxn}, \eqref{epsilonalpha} with Corollary \ref{cor:sigmaepsilon} completes the proof.
\end{proof}


\subsection{Width-Based Convergence}\label{sec:ConvergenceNwidth}
Classical convergence analysis of RBMs is based on the Kolmogorov $n$-width of the solution manifold $\mathcal{M}\subset V$:
\begin{align*}
d_n(\mathcal{M}):=\inf_{\dim W=n}\sup_{v\in\mathcal{M}}\inf_{w\in W}\|v-w\|,
\end{align*}
which measures the best possible approximation accuracy of $\mathcal{M}$ by $n$-dimensional subspaces.
The work \cite{Haasdonk2013} shows that for each $\alpha>0$ and $a>0$, Algorithm \ref{alg:PODGA} with $m=1$ and constant thresholds $\gamma_1=\gamma_2=\cdots$ satisfies
\begin{align*}
    d_n(\mathcal{M})\lesssim n^{-\alpha}&\Longrightarrow \sigma_n\lesssim n^{-\alpha},\\
    d_n(\mathcal{M})\lesssim {\rm e}^{-an^{\alpha}}&\Longrightarrow \sigma_n\lesssim {\rm e}^{-bn^{\beta}}
\end{align*}
for $\beta=\frac{\alpha}{\alpha+1}$ and some $b>0$. 

It is noted that the entropy-based error estimates in Theorem \ref{thm:sigmaepsilon} and Corollary \ref{cor:sigmaepsilon} applies to the POD-Greedy method with a variable threshold $\gamma_n$. This flexibility is particularly useful for realistic analysis of the POD-Greedy method for evolutionary PDEs, due to the fact that the lower bound in \eqref{upperlowerbound} is generally not uniform for time-dependent problems, i.e., $c_{n,1}$ is dependent on $n$. Therefore, the proposed entropy-based convergence estimate is of  practical importance for allowing non-uniform a posteriori error bounds.

In the end of this section, we briefly discuss on the decay of the entropy numbers $\varepsilon_n({\rm co}(\mathcal{M}))$
for model equations such as \eqref{semidiscrete}. Rewriting \eqref{semidiscrete} we obtain
\begin{equation}
u_{\tau,\mu}^j-\tau\nabla\cdot(a_\mu(t_j)\nabla u^j_{\tau,\mu})=u_{\tau,\mu}^{j-1}+\tau f(t_j)
\end{equation}
for $j=1, 2, \ldots, J$, where each $u_{\tau,\mu}^j\in H_0^1(\Omega)$ solves a parametrized elliptic problem. Let  $\mathcal{P}\subset\mathbb{R}^d$ be a compact set of parameters and assume that
\begin{equation}\label{dna}
    d_n(\{a_\mu(t_j): j=0, 1, \ldots, J,~\mu\in\mathcal{P}\})\leq C(s)n^{-s}
\end{equation} 
for all $s>0$. Let  $\mathcal{M}_j=\{u_{\tau,\mu}^j\in H_0^1(\Omega): \mu\in\mathcal{P}\}$. We remark that the assumption \eqref{dna} is true for common family of parametric coefficients such as the piecewise constant $a_\mu$ used in Section \ref{sec:NumExp}.
Using the theorem relating the Kolmogorov width of coefficient sets to solution manifolds of elliptic problems in \cite{CohenDeVore2015,Cohen2015IMA} and induction, we successively obtain that $d_n(\mathcal{M}_1)\lesssim n^{-s}, \ldots, d_n(\mathcal{M}_J)\lesssim n^{-s}$ and thus $$d_n(\mathcal{M})\lesssim\sum_{j=0}^Jd_n(\mathcal{M}_j)\lesssim n^{-s}$$ for all $s>0$. By Carl's inequality in approximation theory (see \cite{Carl1981,Lorentz1996,LiLi2024REIM}), the decay of $d_n(\mathcal{M})=d_n({\rm co}(\mathcal{M}))$ implies the convergence rate of $\varepsilon_n({\rm co}(\mathcal{M}))$ of the same order:
\begin{equation*}
\varepsilon_n({\rm co}(\mathcal{M}))\lesssim n^{-s},\quad\text{ for all }s>0.
\end{equation*}
\section{EIM-POD-Greedy Method}\label{sec:PODEIM}

\begin{algorithm}[thp]
\caption{EIM-POD-Greedy Method}     
\label{alg:PODEIM}       
\begin{algorithmic}[3] 
\Statex\textbf{Input:} a parameter set $\mathcal{P}\subset\mathbb{R}^d$, a set of parametrized functions $\mathcal{F}_T=\left\{a_\mu\text{ on }\mathbb{I}\times\Omega: \mu\in\mathcal{P}\right\}$, a candidate set of interpolation points $\Sigma\subset\Omega$, two positive integers $N$ and $m$;  

\State \textbf{Initialization:} set $V_0=\{0\}$, $V_{T,0}=L^2 (\mathbb{I};V_0)$, $\Pi_{T,0}=0$, $\Pi_{0,m}=0$;
    
    \Statex\textbf {For} {$n=1:N$}
    
       select $a_{\mu_n}\in\mathcal{F}_T$ such that 
       \begin{equation*}
           \|a_{\mu_n}-\Pi_{T,n-1}a_{\mu_n}\|_{V_T}= \sup\limits_{a_\mu\in \mathcal{F}_T}\|a_\mu-\Pi_{T,n-1}a_\mu\|_{V_T};
       \end{equation*}
       
    for $r_n:=a_{\mu_n}-\Pi_{T,n-1} a_{\mu_n}$, compute the first $m$ eigen-pairs $(\lambda_n^1, f_n^1),\ldots, $
    \Statex\qquad  $(\lambda_n^m,f_n^m)$ of $C_{r_n}$ with $\lambda_n^1\geq\cdots\geq\lambda_n^m$, $C_{r_n}(f_n^k)=\lambda_n^k f_n^k$; set $\Pi_{n,0}:=\Pi_{n-1,m}$; 

    \Statex\qquad\textbf{For}{ $k=1:m$ }
       \Statex\qquad\qquad for the residual $r_n^k:=f_n^k-\Pi_{n,k-1}f_n^k$, select $x_{(n-1)m+k}\in\Sigma$ such that 
        \begin{align*} |r_n^k(x_{(n-1)m+k})|=\sup_{x\in\Sigma}|r_n^k(x)|;
        \end{align*}
        \Statex\qquad\qquad compute $q_{(n-1)m+k}:=r_n^k/r_n^k(x_{(n-1)m+k})$ and $$B_{n,k}:=(q_j(x_i))_{1\leq i,j\leq(n-1)m+k};$$
        
        \Statex\qquad\qquad define the interpolation $$\qquad\qquad\Pi_{n,k}g :=(q_1,\ldots,q_{(n-1)m+k})B_{n,k}^{-1}(g(x_1),\ldots,g(x_{(n-1)m+k}))^\top;$$
    \Statex \qquad\textbf{EndFor}
     
     \Statex\qquad set $V_n:={\rm span}\{q_1,\ldots,q_{mn}\}$, $V_{T,n}:=L^2(\mathbb{I}; V_n)$, and $\Pi_{T,n}: \mathcal{F}_T\rightarrow V_{T,n}$ as
    \begin{align*}
        (\Pi_{T,n}a_\mu)^j:= \Pi_{n,m} (a_\mu(t_j)),\quad0\le j \le J;
    \end{align*}
   \Statex\textbf{EndFor}
   \Statex\textbf{Output:} the EIM-POD-Greedy method interpolation $\Pi_N:=\Pi_{N,m}$ and $\Pi_{T,N}$.   
   \end{algorithmic} 
\end{algorithm} 

The implementation efficiency of the weak POD-Greedy method relies on affine parametric structures. For example, the reduced order model \eqref{reducedmodel} for the parabolic problem \eqref{parabolic} allows a highly efficient solution process if the coefficient $a_\mu: \mathbb{I}\rightarrow L^\infty(\Omega)$ is of the form
\begin{align}\label{affinestructure}
    a_{\mu}(t_j)(\cdot)=\sum_{i=1}^{Q_a}w_i^j (\mu)a_i(\cdot),\quad 1\leq j\leq J,
\end{align}
for $a_i\in L^{\infty}(\Omega), 1\leq i\leq Q_a$.
In this case, parameter-independent terms could be pre-computed and stored in the offline stage, avoiding large and repeated calculations in the construction of the reduced basis subspace and evaluation of the POD-Greedy solutions. However, affine structures such as \eqref{affinestructure} are not available in many real-world applications.

The Empirical Interpolation Method (EIM) (see \cite{BarraultMadayNguyenPatera2004,MadayMulaTurinici2016}) is a greedy algorithm for approximating parametrized target functions by affinely separable functions.  When $a_\mu$ is not of the form in \eqref{affinestructure}, one could apply the EIM to each $a_{\mu}(t_j)(\cdot)\subset L^{\infty}(\Omega)$ and separate $\mu$ from the variable $x$. This approach is not efficient when the number $J$ of time layers is large. An alternative way is to construct the EIM approximation of $a_{\mu}(t)(x)$ on a one-dimension higher space-time domain $[0, T]\times\Omega$, which again is potentially expensive due to large dimensionality. 
In this section, we combine the POD in time with the EIM in space and introduce a EIM-POD-Greedy method for constructing affinely separable interpolants $\tilde{a}_\mu(t_j)(x)\approx a_\mu(t_j)(x)$ for $1\leq j\leq J$. The corresponding method is described in Algorithm \ref{alg:PODEIM}.

The construction of $\Pi_{n,m}$ in Algorithm \ref{alg:PODEIM} is motivated by the classical EIM and ensures the principal components $\left\{f_n^1,\ldots,f_n^m\right\}$ of the residual $r_n$ are well approximated by the EIM-POD-Greedy method interpolant. 
The EIM-POD-Greedy method in Algorithm \ref{alg:PODEIM} interpolates a function at $\{x_i\}_{i=1}^{mN}$, i.e., $(\Pi_Ng)(x_i)=g(x_i)$, $i=1, \ldots, mN$. Similarly to the classical EIM, the EIM-POD-Greedy method in Algorithm \ref{alg:PODEIM} satisfies the following properties: 
\begin{subequations}
\begin{align}
    &V_n={\rm span}\left\{ f_i^k\right\}_{1\leq i\leq n,1\leq k\leq m}\label{subspaceidentity},\\
    &\Pi_n^2=\Pi_n,\quad \Pi_{T,n}^2=\Pi_{T,n}\label{unisolvence}. 
\end{align}
\end{subequations}
    
To analyze the error of the EIM-POD-Greedy method, we introduce the set  $$\mathcal{F}:=\left\{a_{\mu}(t_j)\in V:\mu\in\mathcal{P},~0\leq j\leq J\right\}$$ and consider the norm $\|\Pi_n\|:=\sup_{0\neq v\in {\rm span}\{\mathcal{F}\}}\frac{\|\Pi_n v\|}{\|v\|}$ of the interpolation operator. 
The error of the EIM-POD-Greedy method in Algorithm \ref{alg:PODEIM} is measured by 
 \begin{equation*}
\hat{\sigma}_n:=\sup_{\mu\in\mathcal{P}}\|a_\mu-\Pi_{T,n-1}a_\mu\|_{V_T}.
 \end{equation*}
By using the argument in \cite{Li2024CGA}, we can derive convergence analysis of the EIM-POD-Greedy method based on the entropy numbers of the target function set.
\begin{theorem}\label{thm:PODEIMerror}
Let $\Lambda_n:=\|\Pi_n\|$ and $\theta_n:=\lambda_n^m/\lambda_n^1$ in Algorithm \ref{alg:PODEIM}. There exists a constant $C:=C(J,T,m)> 0$ such that for each $n\ge 1$, 
    \begin{align*}
        \hat{\sigma}_n\le C(1+\Lambda_{n-1})\Big(\prod_{i=1}^{n} (1+\Lambda_{i-1})\sqrt{\theta_i}^{-1}\Big)^{\frac{1}{n}}n^{\frac{1}{2}}\varepsilon_{mn}({\rm co}(\mathcal{\mathcal{F}})).
    \end{align*}
\end{theorem}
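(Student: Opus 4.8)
The plan is to mirror the proof of Theorem \ref{thm:sigmaepsilon}, introducing two modifications that account for the interpolation operator $\Pi_{T,n}$ replacing the orthogonal projection $P_{V_{T,n}}$: the solution manifold $\mathcal{M}$ is replaced by the target set $\mathcal{F}$, and every place where orthogonality was used must be paid for with a factor of the Lebesgue-type constant $\Lambda_n=\|\Pi_n\|$. First I would reduce the interpolation error to an orthogonal-projection error. For $g\in\mathrm{span}\{\mathcal{F}\}$, the idempotency \eqref{unisolvence} yields the standard estimate $\|g-\Pi_n g\|\le(1+\Lambda_n)\,\mathrm{dist}(g,V_n)$, so that $\hat\sigma_n\le(1+\Lambda_{n-1})\tilde\sigma_n$ with $\tilde\sigma_n:=\sup_\mu\|a_\mu-P_{V_{T,n-1}}a_\mu\|_{V_T}$ the orthogonal-projection error onto the EIM subspace. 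Unlike $\hat\sigma_n$, the quantity $\tilde\sigma_n$ is monotonically non-increasing since $V_{T,n-1}\subset V_{T,n}$, and it satisfies $\tilde\sigma_i\le\hat\sigma_i$ because the projection is the best approximation. Chaining these gives the quasi-monotonicity $\hat\sigma_n\le(1+\Lambda_{n-1})\hat\sigma_i$ for every $i\le n$, hence $\hat\sigma_n\le(1+\Lambda_{n-1})\big(\prod_{i=1}^n\hat\sigma_i\big)^{1/n}$; this is exactly the source of the prefactor $(1+\Lambda_{n-1})$.

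Next, exactly as in \eqref{thm:sigmaepsilon:step1}--\eqref{thm:sigmaepsilon:step2}, I would use the greedy choice of $\mu_i$ together with Lemma \ref{sumlambda} to obtain $\lambda_i^1\ge\hat\sigma_i^2/(J+1)$, hence $\lambda_i^k\ge\theta_i\hat\sigma_i^2/(J+1)$ for $1\le k\le m$, along with the Cauchy--Schwarz bound $\tau\sum_j|\langle r_i^j,f_i^k\rangle|\le\sqrt{T\lambda_i^k}$. The core of the argument is then an analogue of Lemma \ref{lemma:comparison} for $\mathcal{F}$. With $v_i^k:=\tau\sum_j\langle r_i^j,f_i^k\rangle\,a_{\mu_i}(t_j)$ as in \eqref{vik}, the normalized vectors $v_i^k/(\tau\sum_j|\langle r_i^j,f_i^k\rangle|)$ again lie in $\mathrm{co}(\mathcal{F})$, so the skew simplex they span sits inside $\mathrm{co}(\mathcal{F})$ and its volume is controlled by $\varepsilon_{mn}(\mathrm{co}(\mathcal{F}))$. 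Expanding $a_{\mu_i}(t_j)=\Pi_{i-1}(a_{\mu_i}(t_j))+r_i^j$ shows $v_i^k-P_{V_{i-1}}v_i^k=\lambda_i^k\,(I-P_{V_{i-1}})f_i^k$, so the diagonal volume factors are now governed by the distances from the POD modes $f_i^k$ to the previously generated subspace, rather than being simply $\lambda_i^k$ as in the orthonormal case.

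The decisive new ingredient is the lower bound $\mathrm{dist}(f_i^k,V_{i-1})\ge(1+\Lambda_{i-1})^{-1}$. It holds because $f_i^k$ is a POD mode of the residual $r_i$, hence vanishes at the first $(i-1)m$ interpolation points, so $\Pi_{i-1}f_i^k=0$; the estimate $1=\|f_i^k\|=\|f_i^k-\Pi_{i-1}f_i^k\|\le(1+\Lambda_{i-1})\,\mathrm{dist}(f_i^k,V_{i-1})$ then gives the claim (here $V_{i-1}=\mathrm{span}\{f_1^1,\dots,f_{i-1}^m\}$ by \eqref{subspaceidentity}). Inserting this into the volume comparison contributes the factor $\prod_i(1+\Lambda_{i-1})^m$, which after taking the $mn$-th root becomes $\big(\prod_{i=1}^n(1+\Lambda_{i-1})\big)^{1/n}$; combined with the eigenvalue and Cauchy--Schwarz bounds above and the Stirling-type asymptotics used for Corollary \ref{cor:sigmaepsilon}, this assembles into the stated inequality.

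The hard part will be the volume lower bound when $m\ge 2$, because the POD modes $\{f_i^k\}$ are no longer globally orthonormal as they are in Algorithm \ref{alg:PODGA}: within a single greedy block the orthonormal modes $f_i^1,\dots,f_i^m$ may have correlated projections onto $V_{i-1}$, so the sequential distances $\mathrm{dist}(f_i^k,\,V_{i-1}\oplus\mathrm{span}\{f_i^1,\dots,f_i^{k-1}\})$ that genuinely enter the Gram/volume computation can be strictly smaller than $\mathrm{dist}(f_i^k,V_{i-1})$. To control them uniformly I would work with the intermediate interpolants $\Pi_{i,k-1}$ of Algorithm \ref{alg:PODEIM} and the bound $\mathrm{dist}(f_i^k,\mathrm{range}\,\Pi_{i,k-1})\ge\|f_i^k-\Pi_{i,k-1}f_i^k\|/(1+\|\Pi_{i,k-1}\|)$, so that each diagonal factor still retains a lower bound of order $(1+\Lambda_{i-1})^{-1}$; checking that this correlation does not degrade the clean per-block factor in the statement is the step demanding the most care. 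The remaining manipulations---the asymptotics of $(mn)!$ and $S_{mn}$ and the rearrangement into the displayed product---follow the template of Theorem \ref{thm:sigmaepsilon} and Corollary \ref{cor:sigmaepsilon}.
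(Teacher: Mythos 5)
Your route is genuinely different from the paper's, and at the decisive point it is more careful. The paper's proof never revisits the volume argument: it establishes \eqref{eimprojection} and the Lebesgue-type bound $\|a_{\mu}-\Pi_{T,n-1}a_{\mu}\|_{V_T}\le(1+\Lambda_{n-1})\|a_{\mu}-P_{V_{T,n-1}}a_{\mu}\|_{V_T}$, deduces from them the weak greedy inequality \eqref{ProjectionAndInterpolation} with threshold $\gamma_i=(1+\Lambda_{i-1})^{-1}$, invokes the subspace identity \eqref{subspaceidentity}, and then applies Theorem \ref{thm:sigmaepsilon} as a black box before multiplying by $(1+\Lambda_{n-1})$; there the product $\prod_{i=1}^n(1+\Lambda_{i-1})^{1/n}$ originates entirely from the greedy threshold. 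In your argument the same product arises from a different mechanism: the geometric bound $\mathrm{dist}(f_i^k,V_{i-1})\ge(1+\Lambda_{i-1})^{-1}$ deduced from $\Pi_{i-1}f_i^k=0$, while your eigenvalue bound $\lambda_i^1\ge\hat\sigma_i^2/(J+1)$ needs no threshold at all, because Algorithm \ref{alg:PODEIM} applies the POD to the interpolation residual, whose $V_T$-norm is exactly $\hat\sigma_i$. This re-derivation buys real rigor: since the POD modes of Algorithm \ref{alg:PODEIM} come from $a_{\mu_n}-\Pi_{T,n-1}a_{\mu_n}$ rather than from $a_{\mu_n}-P_{V_{T,n-1}}a_{\mu_n}$, they are \emph{not} orthogonal to $V_{i-1}$, the identity \eqref{projectionerror} fails, and Lemma \ref{lemma:comparison} cannot be cited verbatim --- a point that the paper's appeal to Theorem \ref{thm:sigmaepsilon} passes over in silence; the two mechanisms happen to produce identical per-step factors, which is why both arrive at the stated bound. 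Your quasi-monotonicity device $\hat\sigma_n\le(1+\Lambda_{n-1})\bigl(\prod_{i=1}^n\hat\sigma_i\bigr)^{1/n}$, replacing the monotonicity of $\sigma_n$ used at the end of Theorem \ref{thm:sigmaepsilon}, is also correct and necessary, since $\hat\sigma_n$ itself need not decrease.

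The step you yourself flag as hard is the one place your outline is incomplete, and the repair you sketch is not the right tool. For $m\ge 2$ the sequential volume factors are $\mathrm{dist}(f_i^k,W_{i,k})$ with $W_{i,k}:=V_{i-1}\oplus\mathrm{span}\{f_i^1,\ldots,f_i^{k-1}\}$, and bounding them through $\Pi_{i,k-1}$ stalls: the inequality $\mathrm{dist}(f_i^k,\mathrm{range}\,\Pi_{i,k-1})\ge\|f_i^k-\Pi_{i,k-1}f_i^k\|/(1+\|\Pi_{i,k-1}\|)$ is useless without a lower bound on $\|f_i^k-\Pi_{i,k-1}f_i^k\|$, which is unavailable because $f_i^k$ does not vanish at the in-block points $x_{(i-1)m+1},\ldots,x_{(i-1)m+k-1}$, and it would in any case introduce the intermediate constants $\|\Pi_{i,k-1}\|$, which do not appear in the statement. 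The clean fix uses only $\Pi_{i-1}$ together with in-block orthonormality. Write $w=w_1+w_2\in W_{i,k}$ with $w_1\in V_{i-1}$ and $w_2\in\mathrm{span}\{f_i^1,\ldots,f_i^{k-1}\}$. Every in-block mode $f_i^s$ lies in $\mathrm{span}\{r_i^j\}_{0\le j\le J}$ (assume all $\lambda_i^s>0$; otherwise $\theta_i=0$ and the estimate is vacuous) and hence vanishes at $x_1,\ldots,x_{(i-1)m}$, so $\Pi_{i-1}f_i^s=0$ for every $s$, while $\Pi_{i-1}w_1=w_1$. Therefore
\begin{equation*}
(I-\Pi_{i-1})(f_i^k-w)=f_i^k-w_2,
\end{equation*}
and since $f_i^k\perp w_2$ and $\|f_i^k\|=1$,
\begin{equation*}
\|f_i^k-w\|\ \ge\ \frac{\|f_i^k-w_2\|}{1+\Lambda_{i-1}}\ \ge\ \frac{1}{1+\Lambda_{i-1}}.
\end{equation*}
Thus $\mathrm{dist}(f_i^k,W_{i,k})\ge(1+\Lambda_{i-1})^{-1}$ uniformly in $k$, the per-block contribution is $(1+\Lambda_{i-1})^{-m}$ exactly as your bookkeeping requires, and the remaining Cauchy--Schwarz, eigenvalue-ratio and Stirling steps deliver the stated estimate. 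With this single substitution your proof is complete and, unlike the paper's, self-contained.
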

\begin{proof}
For any $a_{\mu}\in\mathcal{F}_T$, it holds that
    \begin{align}\label{eimprojection}
    \|a_{\mu}-P_{V_{T,n-1}}a_{\mu}\|_{V_T}=\inf_{w\in V_{T,n-1}}\|a_{\mu}-w\|_{V_T}\leq \|a_{\mu}-\Pi_{T,n-1}a_{\mu}\|_{V_T}.
\end{align}
Taking the supremum on both sides over $a_{\mu}\in\mathcal{F}_T$, we obtain
 \begin{align*}
     \sup_{a_{\mu}\in\mathcal{F}_T}\|a_{\mu}-P_{V_{T,n-1}}a_{\mu}\|_{V_T}\leq \sup_{a_{\mu}\in\mathcal{F}_T}\|a_{\mu}-\Pi_{T,n-1}a_{\mu}\|_{V_T}.
 \end{align*}
 By the definition of $\|\Pi_{n-1}\|$, it is straightforward to verify that $\|\Pi_{T,n-1}\|\leq\|\Pi_{n-1}\|$. Thus for $a_{\mu}\in\mathcal{F}_T$ we have 
\begin{align*}
    \|a_{\mu}-P_{V_{T,n-1}}a_{\mu}\|_{V_T}&\leq\|a_{\mu}-\Pi_{T,n-1}a_{\mu}\|_{V_T}\\
    &\leq(1+\Lambda_{n-1})\inf_{w\in V_{T,n-1}}\|a_{\mu}-w\|_{V_T}\notag\\
    &=(1+\Lambda_{n-1})\|a_{\mu}-P_{V_{T,n-1}}a_{\mu}\|_{V_T}.
\end{align*}
Therefore, the solution $a_{\mu_n}$ of the maximization problem
\begin{align*}
    \|a_{\mu_n}-\Pi_{T,n-1}a_{\mu_n}\|_{V_T}=\sup_{a_{\mu}\in\mathcal{F}_T}\|a_{\mu}-\Pi_{T,n-1}a_{\mu}\|_{V_T}
\end{align*}
in Algorithm \ref{alg:PODEIM} satisfies that 
\begin{align}\label{ProjectionAndInterpolation}
    \|a_{\mu_n}-P_{V_{T,n-1}}a_{\mu_n}\|_{V_T}\geq \frac{1}{1+\Lambda_{n-1}}\sup_{a_{\mu}\in\mathcal{F}_T}\|a_{\mu}-P_{V_{T,n-1}}a_{\mu}\|_{V_T}.
\end{align}
In addition, property \eqref{subspaceidentity} indicates that the interpolation bases form the same subspace as the POD bases. Therefore, the EIM-POD-Greedy method can be regarded as a POD-Greedy-type algorithm with $\gamma_i=\frac{1}{1+\Lambda_{i-1}}$. Using \eqref{eimprojection}, \eqref{ProjectionAndInterpolation} and Theorem \ref{thm:sigmaepsilon} with  $\gamma_i=\frac{1}{1+\Lambda_{i-1}}$, we conclude that for $1 \leq i \leq n$,
    \begin{align*}
        \|a_{\mu_n}-\Pi_{T,n-1}a_{\mu_n}\|_{V_T}&\le(1+\Lambda_{n-1})\|a_{\mu_n}-P_{V_{T,n-1}}a_{\mu_n}\|_{V_T}\\
        &\le C(1+\Lambda_{n-1})(\prod_{i=1}^n (1+\Lambda_{i-1})\sqrt{\theta_i}^{-1}\Big)^{\frac{1}{n}}n^{\frac{1}{2}}\varepsilon_{mn}({\rm co}(\mathcal{\mathcal{F}})).
    \end{align*}
    The proof is complete.
\end{proof}

When $\varepsilon_n({\rm co}(\mathcal{F}))$ is exponentially convergent to zero, e.g., $\mathcal{F}$ is analytic with respect to the parameter (see \cite{LiLi2024REIM}), convergence rate of the EIM-POD-Greedy method with multiple POD modes is higher than the single-mode one due to Theorem \ref{thm:PODEIMerror}.

\section{Numerical Experiments}\label{sec:NumExp}
In this section, we test the numerical performance of the weak POD-Greedy method as well as the EIM-POD-Greedy method. Throughout all numerical experiments, by $n$ and $N$ we denote the number of the POD-Greedy iterations and the dimension of the reduced basis subspace, respectively. In particular, $N=mn$ for the POD-Greedy method and the EIM-POD-Greedy method using $m$ POD modes.

\subsection{Weak POD-Greedy Method}\label{subsec:PODGreedyExp} First we consider the parabolic model \eqref{parabolic} with $\Omega=(-1,1)^2$, $T=1$, and 
$f = {\rm e}^{-t}\sin(\pi x)\sin(\pi y)$, $g = \sin(\pi x)\sin(\pi y)$.
The diffusion coefficient  $a_{\mu}=\mu\mathbbm{1}_{\Omega_1}+\mathbbm{1}_{\Omega_2}$ is time-independent, where $\mu$ takes its value in $\mathcal{P}$, a subset of 100 equidistantly-distributed points in $[1, 2]$, and  
$$\Omega_1=(-1,0]\times(-1,1),\quad\Omega_2=\Omega\backslash\Omega_1.$$
The high-fidelity fully discrete model is \eqref{fullydiscrete}, where $\tau=2^{-9}$ and $V_h$ is a linear finite element space based on a uniform triangular mesh with 263169 vertices.

Let $M_h$ be the finite element  matrix corresponding to the $H^1(\Omega)$ inner product, and $R_n$ the matrix with $R_n(:,j)$ representing the finite element function $u_{\tau,h,\mu_n}^j-P_{V_{n-1}}u_{\tau,h,\mu_n}^j$.
The POD step in Algorithm \ref{alg:PODGA} requires the $m$ leading eigen-pairs of $R_n^{\top}M_hR_n$, which is achieved by applying the function \texttt{svds} in MATLAB to $C_hR_n$, where $M_h=C_hC_h^\top$ is the Cholesky decomposition of $M_h$.

The reduced order model is given in \eqref{reducedmodel}. To efficiently implement the weak POD-Greedy method as explained in \eqref{estimator}, we make use of the following a posteriori error estimator 
\begin{align*}
\Delta_n(\mu):=\Delta_n(u_{\tau,h,\mu})=\sqrt{\tau\sum_{j=0}^J\|r_{j,n,\mu}\|_{V_h^*}^2},\end{align*}
where $r_{j,n,\mu}$ is a linear functional in $V_h^*$ defined by
\begin{align*}
    r_{j,n,\mu}(v_h):=(f(t_j),v_h)-\frac{(u_{\tau,n,\mu}^j-u_{\tau,n,\mu}^{j-1},v_h)}{\tau}-(a_\mu \nabla u_{\tau,n,\mu}^j,\nabla v_h).
\end{align*}
The dual norm of $r_{j,n,\mu}$ is 
\[
\|r_{j,n,\mu}\|_{V_h^*}:= \sup_{0\neq v_h\in V_h}\frac{r_{j,n,\mu}(v_h)}{\|v_h\|_{H^1(\Omega)}}.
\]
The error of the weak POD-Greedy method in Algorithm \ref{alg:PODGA} is
\begin{align*}
    E_N:=\sigma_{N/m}=\sup_{\mu\in\mathcal{P}}\|u_{h,\tau,\mu}-P_{V_{T,N/m-1}}u_{h,\tau,\mu}\|_{V_T}.
\end{align*}
The corresponding convergence history of $E_N$ with $1\leq m\leq4$ is presented in Figure \ref{fig:PODgreedytest} (left). It is observed that Algorithm \ref{alg:PODGA} with $m\geq 2$ is slightly less accurate than the one with $m=1$ using the reduced basis subspace of the same dimension. However, the computational cost of Algorithm \ref{alg:PODGA} with $m\geq 2$ is also much less than the single-mode one because it produces a $N$-dimensional reduced basis subspace with only $N/m$ POD-Greedy iterations, which significantly reduces the cost of the offline stage.

It is shown in Figure \ref{fig:PODgreedytest} (right) that a posteriori error estimator $\Delta_n$ and the numerical error $e_n$ are very close, which indicates that $\Delta_n$ is an efficient upper bound for the error $e_n$. 
Let $\lambda_n^1$, $\lambda_n^m$ denote the first and $m$-th singular values of $C_h R_n$, respectively. The eigenvalue ratio $\theta_n:=\lambda_n^m/\lambda_n^1$ for Algorithm \ref{alg:PODGA} with $m$ POD modes are listed in Table \ref{tab:PODGreedytheta}, showing fluctuations within a stable range. The non-monotonic behavior of $\theta_n$ with increasing $m$ is attributed to the fact that both the reduced basis subspace $V_n$ and the selected parameter $\mu_n$ vary with $m$, leading to variations in the eigenvalues. 

\begin{figure}[thp]
\centering
\subfloat{\includegraphics[width=0.5\textwidth]{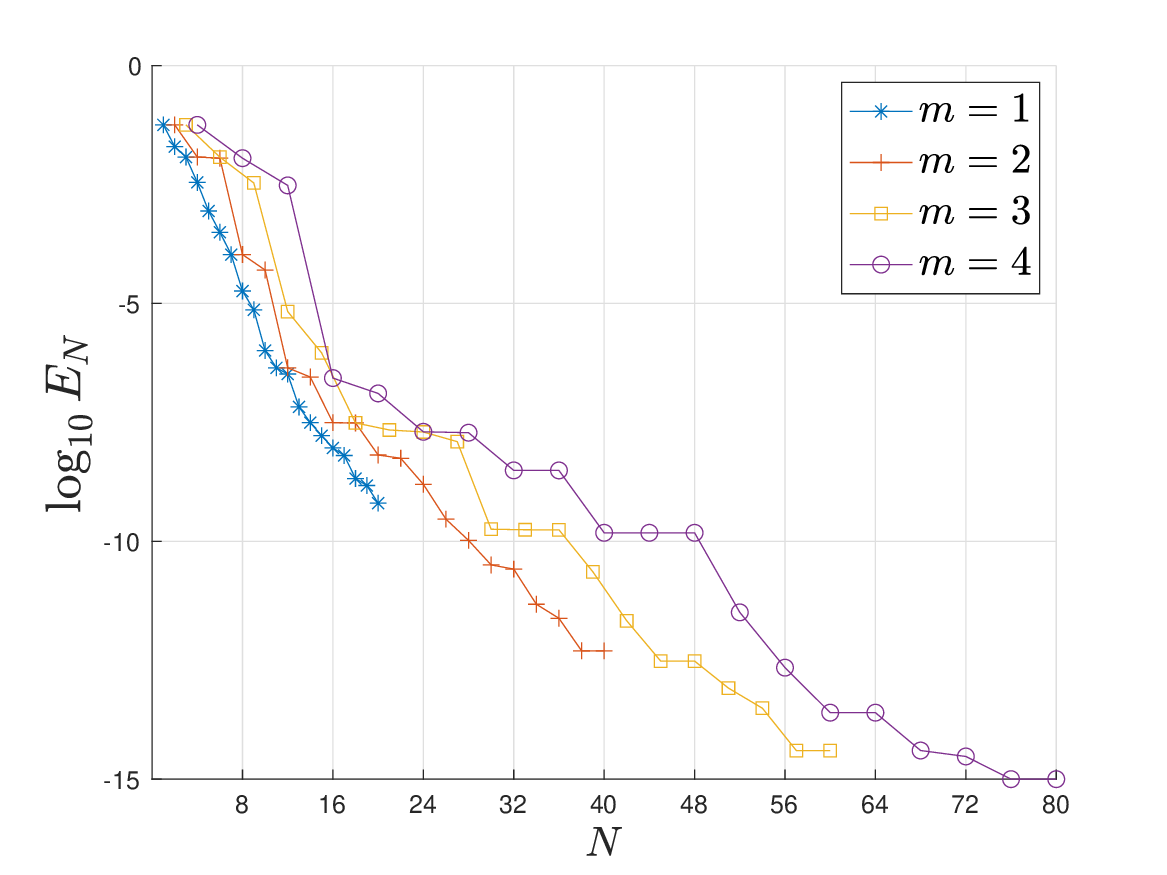}}
\hfill
\subfloat{\includegraphics[width=0.5\textwidth]{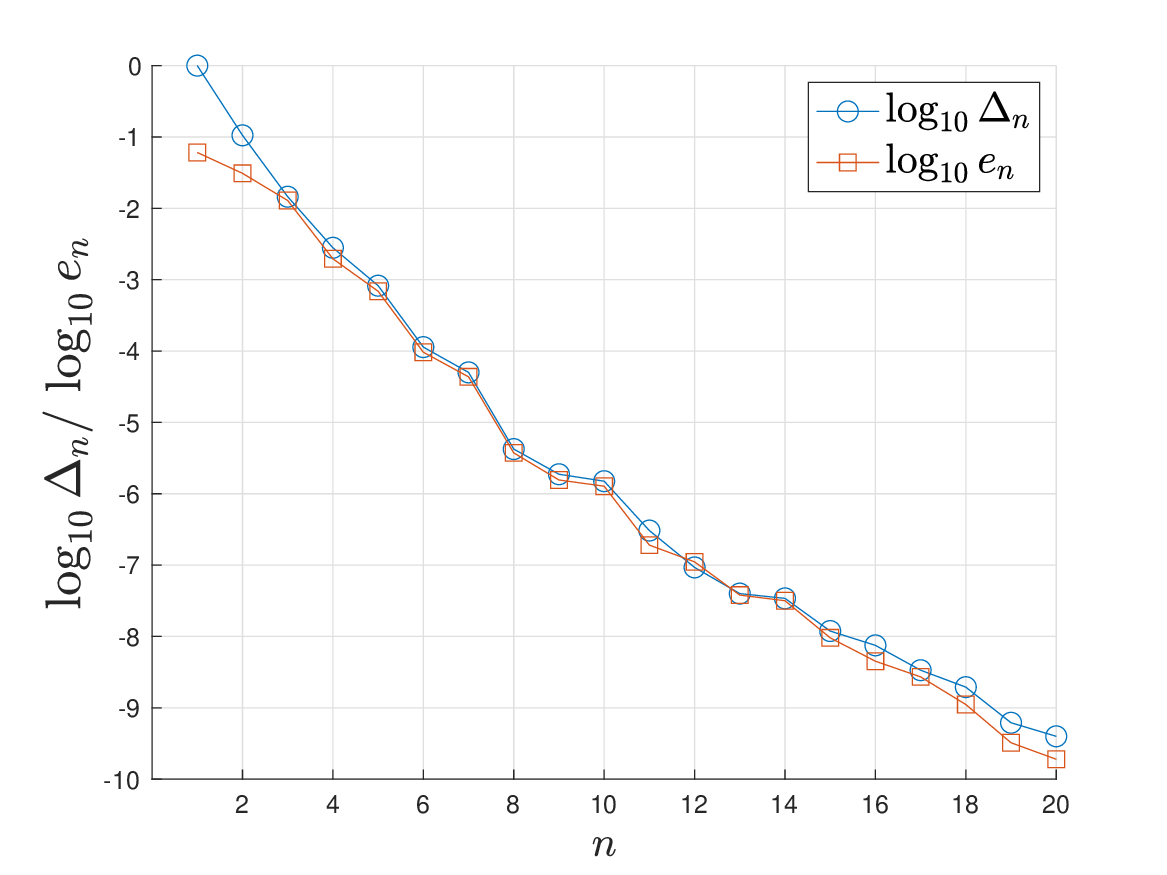}}
\caption{Convergence of $E_N$ with $m$ POD modes against the dimension $N$ of the reduced space (left); convergence history of $\Delta_n$ and $e_n=\|u_{\tau,h,\mu_n}-u_{\tau,n,\mu_n}\|$ with $m=1$ against the number of POD-Greedy iterations (right).}
\label{fig:PODgreedytest}
\end{figure}

\begin{table}[thp]
\centering
\begin{tabular}{|>{\centering\arraybackslash}m{2cm}||>{\centering\arraybackslash}m{2cm}|>{\centering\arraybackslash}m{2cm}|>{\centering\arraybackslash}m{2cm}|>{\centering\arraybackslash}m{2cm}|}
\hline
\hline
$n$  & $m=2$ & $m=3$ & $m=4$ \\
\hline
4  & 0.2415 & 0.0198 & 7.0044e-4\\
\hline
8  & 0.1312 & 0.0051 & 0.3881\\
\hline
12  & 0.4729 & 0.2482 & 0.0241\\
\hline 
16  & 0.3010 & 0.0307 & 0.5875\\
\hline
20  & 0.2759 & 0.0231 & 1.0895e-4\\
\hline
\end{tabular}
\caption{$\theta_n=\lambda_n^m/\lambda_n^1$ for the POD-Greedy method with $m$ POD modes at the $n$-th iteration.}
\label{tab:PODGreedytheta}
\end{table}

\subsection{EIM-POD-Greedy Method} In the second experiment, we consider the target function defined as 
\begin{align*}
a_{\bm{\mu}}(t)(x) = \frac{1}{\sqrt{(x-\mu_1)^2+(t-\mu_2)^2+1}},
\end{align*}
for $(t,x)\in I\times\Omega$ with $I=\Omega=(0,1)$, where $\bm{\mu}=(\mu_1,\mu_2)\in\hat{\mathcal{P}}\subset [0,1]^2$ contains 100 uniformly distributed parameters. The EIM-POD-Greedy method makes use of 128 time levels and the candidate set $\Sigma$ having  $100$ equidistributed points $\{\tilde{x}_i\}_{1\leq i\leq100}$ in $\Omega$. The classical EIM is implemented using 12800 points in the two-dimensional domain $I\times\Omega$.
The EIM-POD-Greedy method is implemented by using the $L^2(\mathbb{I};L^{\infty}(\Omega))$ norm as the $V_T$ norm. The POD step in Algorithm \ref{alg:PODEIM} is achieved by applying the MATLAB function \texttt{svds} to the residual matrix 
\begin{align}\label{PODEIMresidual}
    R_n=(a_{\bm{\mu}_n}(t_j)(\tilde{x}_i)-\Pi_{n-1}a_{\bm{\mu}_n}(t_j)(\tilde{x}_i))_{1\leq i\leq 100,1\leq j\leq 128}.
\end{align}

For the EIM-POD-Greedy method with $m=1$, we compute the a posteriori error estimator 
\begin{align*}
    \hat{\Delta}_n({\bm{\mu}}):=\sqrt{\tau\sum_{j=0}^J|a_{\bm{\mu}}(t_j)(x_n)-\Pi_{n-1}a_{\bm{\mu}}(t_j)(x_n)|^2}.
\end{align*}
The effectivity of $\hat{\Delta}_n({\bm{\mu}})$ is measured by $$\eta_n({\bm{\mu}}):=\frac{\hat{\Delta}_n({\bm{\mu}})}{\|a_{\bm{\mu}}-\Pi_{T,n-1}a_{\bm{\mu}}\|_{V_T}}$$ 
and the average effectivity index is  $\overline{\eta}_n:=\frac{1}{|\mathcal{P}|}\sum_{{\bm{\mu}}\in\mathcal{P}}\eta_N({\bm{\mu}})$, which is expected to be close to unity. Let $\kappa_n:=\kappa(B_{n,1})$ denote the $\ell^2$ condition number of $B_{n,1}$. Let $\left\{w_i\right\}_{1\leq i\leq n}\subset V_n$ be the interpolation basis of $V_n$ given by 
\begin{align*}
    w_i(x_j)&=\delta_{ij},\quad\quad 1\leq j\leq n.
\end{align*}
It is straightforward to see that  
\begin{align*}
    \|\Pi_n\|=\Lambda_n\leq \tilde{\Lambda}_n:=\sup_{x\in\Omega}\sum_{i=1}^n|w_i(x)|.
\end{align*}

Table \ref{tab:PODEIMconstant} records the values of $ \overline{\eta}_n, \kappa_n, \tilde{\Lambda}_n$. The average effectivity $\overline{\eta}_n$, being close to unity as anticipated, confirms that $\hat{\Delta}_n(\bm{\mu})$ is an efficient error estimator. The modest growth of $\tilde{\Lambda}_n$ provides a mild upper bound for the norm of $\Pi_n$. The eigenvalue ratios $
\theta_n:=\lambda_n^m/\lambda_n^1$ for the EIM-POD-Greedy method with $m$ POD modes are presented in Table \ref{tab:PODEIMtheta}, where $\lambda_n^1$ and $\lambda_n^m$ denote the first and $m$-th singular values of the residual matrix $R_n$ in \eqref{PODEIMresidual}. The non-monotonic behavior of $\theta_n$ as $m$ increases is due to the same reason as that explained for $\theta_n$ in the POD-Greedy method. 

The error of the EIM-POD-Greedy method is 
\begin{align*}
    \widehat{E}_N:=\hat{\sigma}_{N/m}=\sup_{\mu\in\hat{\mathcal{P}}}\|a_{\mu}-\Pi_{T,N/m-1}a_{\mu}\|_{V_T}.
\end{align*}
Convergence history of the EIM-POD-Greedy error $\hat{E}_N$ is presented in Figure \ref{fig:PODEIMtest} (left). Similarly to the numerical results in Section \ref{subsec:PODGreedyExp}, the EIM-POD-Greedy method with multiple POD modes exhibits a higher efficiency than the single-mode one. In Figure \ref{fig:PODEIMtest} (right), we show the convergence history of the EIM-POD-Greedy method and the classical EIM, highlighting the superior efficiency of the EIM-POD-Greedy method. 
\begin{table}[ht]
    \centering
    \begin{tabular}{|>{\centering\arraybackslash}m{2cm}||>{\centering\arraybackslash}m{2cm}|>{\centering\arraybackslash}m{2cm}|>{\centering\arraybackslash}m{2cm}|>{\centering\arraybackslash}m{2cm}|}
    \hline
    \hline
    $n$  & $\overline{\eta}_n$ & $\kappa_n$ & $\tilde{\Lambda}_n$ \rule{0pt}{12pt}\\
    \hline
    4  & 0.9911 & 3.2549 & 1.5990\\
    \hline
    8  & 0.9974 & 5.9552 & 2.8051\\
    \hline
    12  & 0.9985 & 9.1616 & 5.3882\\
    \hline 
    16  & 0.9998 & 15.3889 & 4.8197\\
    \hline
    20  & 0.9844 & 19.2868 & 6.5005\\
    \hline
    \end{tabular}
    \caption{$\overline{\eta}_n$, $\kappa_n$, $\tilde{\Lambda}_n$ for the EIM-POD-Greedy method with $m=1$ at the $n$-th EIM-POD-Greedy iteration.}
    \label{tab:PODEIMconstant}
\end{table}
\begin{figure}[ht]
    \centering
\subfloat{\includegraphics[width=0.5\textwidth]{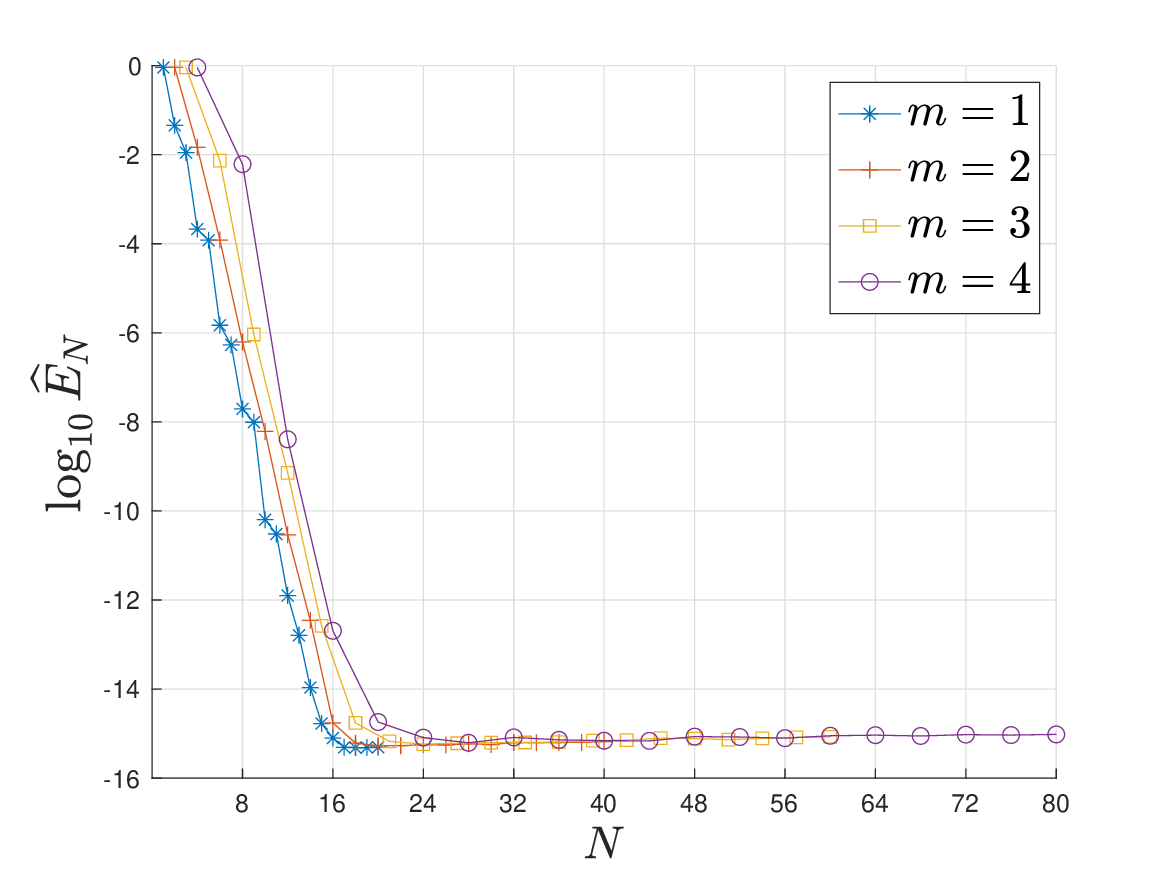}}
    \hfill
    \subfloat{\includegraphics[width=0.5\textwidth]{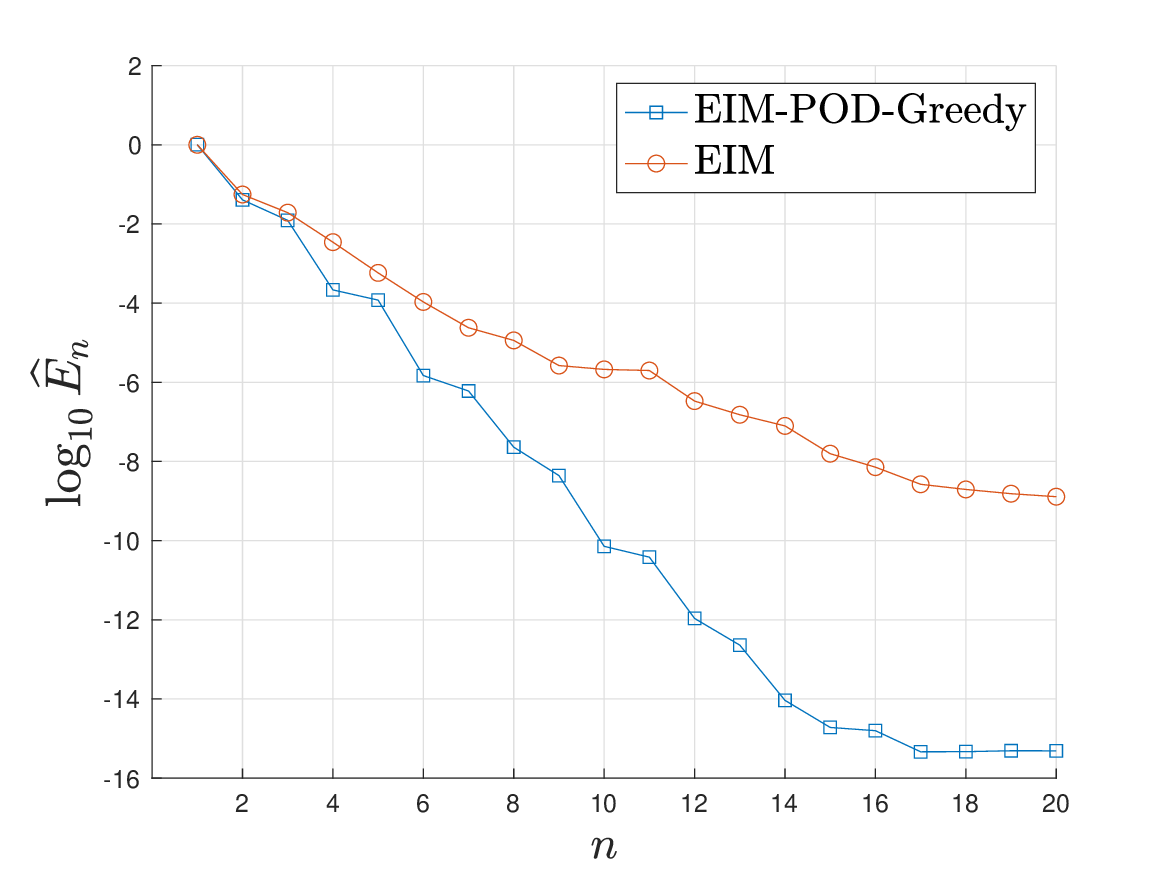}}
    \caption{Convergence of the EIM-POD-Greedy method with $m=1,2,3,4$ POD modes against the dimension of the reduced subspace (left); convergence of the EIM and the EIM-POD-Greedy method with $m=1$ against the number of EIM-POD-Greedy iterations (right).}
    \label{fig:PODEIMtest}
\end{figure}
    \begin{table}[ht]
    \centering
    \begin{tabular}{|>{\centering\arraybackslash}m{2cm}||>{\centering\arraybackslash}m{2cm}|>{\centering\arraybackslash}m{2cm}|>{\centering\arraybackslash}m{2cm}|>{\centering\arraybackslash}m{2cm}|}
    \hline
    \hline
    $n$  & $m=2$ & $m=3$ & $m=4$ \\
    \hline
    4  & 0.0495 & 3.3206e-4 & 0.0021\\
    \hline
    8  & 0.2289 & 0.4512 & 0.5746\\
    \hline
    12  & 0.6703 & 0.5661 & 0.5031\\
    \hline 
    16  & 0.6484 & 0.6071 & 0.6661\\
    \hline
    20  & 0.9141 & 0.3978 & 0.5305\\
    \hline
    \end{tabular}
    \caption{$\theta_n=\lambda_n^m/\lambda_n^1$ for the EIM-POD-Greedy method with $m$ POD modes at the $n$-th iteration.}
    \label{tab:PODEIMtheta}
\end{table}

\section*{Acknowledgements} This work was partially supported by the National Science Foundation of China (no.~12471346) and the Fundamental Research Funds for the Zhejiang Provincial Universities (no. 226-2023-00039).

\bibliographystyle{amsplain}

\providecommand{\bysame}{\leavevmode\hbox to3em{\hrulefill}\thinspace}
\providecommand{\MR}{\relax\ifhmode\unskip\space\fi MR }
\providecommand{\MRhref}[2]{%
  \href{http://www.ams.org/mathscinet-getitem?mr=#1}{#2}
}
\providecommand{\MRhref}[2]{#2}

\end{document}